\def\N{{\mathbb N}}
\date{}
\definecolor{sah}{rgb}{0.66,0.33, 0.04}
\definecolor{adel4}{cmyk}{1,0,0,0}
\definecolor{adel3}{rgb}{0.66,0.33, 0.04}
\definecolor{adel1}{cmyk}{0,0.20,1,0}
\definecolor{adel2}{cmyk}{0,0.40,1,0.30}
\definecolor{adel0}{rgb}{0.99,0.60, 0.30}
\definecolor{trut}{rgb}{0.99,0.80, 0.00}
\definecolor{trus}{rgb}{0.00, 0.50, 0.00}
 \definecolor{trust}{rgb}{0.99, 0.99, 0.80}
\definecolor{MaCouleur}{rgb}{0,0.9,0.3}
\def\virgp{\raise 2pt\hbox{,}}
\def\({\left(}
\def\){\right)}
\def\<{\langle}
\def\>{\rangle}
\theoremstyle{plain}
\newtheorem{Theo}{Theorem}
\newtheorem{Lemm}{Lemma}
 \newtheorem{prop}{Proposition}
 \newtheorem{Defin}{ Definition}
 \theoremstyle{definition}
 \newtheorem*{gracies}{Acknowledgements}
 \newtheorem*{remas}{Remarks}
 \newtheorem*{rema}{Remark}
\newcommand{\CC}{\mathbb{C}}
\newcommand{\R}{{\mathbb R}}
\newcommand{\RR}{{\mathbb R}}
 \title[]{On the rotating doubly connected patches}
\subjclass[2000]{76B03 ; 35Q35} \keywords{ 2D incompressible Euler
equations, rotating patch, V-state, potential theory}
 \title[]{On rotating doubly connected vortices }
\author[T. Hmidi]{Taoufik Hmidi}
\address{IRMAR, Universit\'e de Rennes 1\\ Campus de
Beaulieu\\ 35~042 Rennes cedex\\ France}
\email{thmidi@univ-rennes1.fr}
\author[J. Mateu]{Joan Mateu}
\address{Departament de Matem\`{a}tiques\\
Universitat Aut\`{o}noma de Barcelona\\
08193 Bellaterra, Barcelona, Catalonia} \email{ mateu@mat.uab.cat}
\author[J. Verdera]{Joan Verdera}
\address{Departament de Matem\`{a}tiques\\
Universitat Aut\`{o}noma de Barcelona\\
08193 Bellaterra, Barcelona, Catalonia} \email{ jvm@mat.uab.cat}
\begin{document}

\begin{abstract}
In this paper we consider rotating doubly connected vortex patches
for the Euler equations in the plane. When the inner interface is an
ellipse we show that the exterior interface must be a confocal
ellipse. We then discuss some relations, first found by Flierl and
Polvani, between the parameters of the ellipses, the velocity of
rotation and the magnitude  of the vorticity in the domain enclosed by
the inner ellipse.
\end{abstract}

\maketitle

\begin{quote}
\footnotesize\tableofcontents
\end{quote}

\section{Introduction}
The motion of an  incompressible ideal fluid with constant density
is described by the Euler equations
\begin{equation*}
\label{E}
 \left\{
\begin{array}{ll}
\partial_t u(t,x)+u(t,x)\cdot\nabla u(t,x)+\nabla P(t,x)=0,\qquad x\in \mathbb R^d, t>0, \\
\textnormal{div }u(t,x)=0,\\
u(0,x)= u_{0}(x),
\end{array} \right.
     \end{equation*}
 where    \mbox{$u: \mathbb [0,T]\times\mathbb R^d\to \RR^d$}, $T>0$,
  denotes  the velocity field  of the fluid particles and  the scalar function
\mbox{$P$} stands for  the  pressure. The incompressibility
condition is an immediate consequence of the continuity equation
when the  density is assumed to be constant.  The mathematical
theory for this classical system is very reach. Many results were
devoted to the  local well-posedness problem in different function
spaces. Roughly speaking, it is well-known that the Cauchy problem
is locally well-posed if the initial velocity is above the scaling
of $C^1$ functions, for example, if $u_0\in H^s, s>\frac d2+1$. For
more details about this we refer the reader to \cite{BM,Ch, Kato}
and the references therein. Whether these solutions develop
singularities in finite time is still an open problem, apparently
very difficult. In dimension $2$ global regularity was proved long
ago \cite{Wolib} and
 extensions to the context of axisymmetric flows without swirl were
 obtained in \cite{Uk}.  In these cases the global existence follows from a special
structure of the vorticity, which yields some strong conservation
laws.

In this paper we shall focus on the vorticity dynamics  in the
plane.
  In this case the vorticity can be identified with the scalar function  $\omega=\partial_1 u^2-\partial_2 u^1$
  and its evolution is governed by the transport equation
 \begin{equation}\label{vorts1}
\partial_t\omega+u\cdot\nabla \omega=0,
\end{equation}
which amounts to saying that vorticity is conserved along particle
trajectories. This yields the  conservation laws
$$
\|\omega(t)\|_{L^p}=\|\omega_0\|_{L^p}, \quad p\in [1,\infty].
$$
The preservation of vorticity along trajectories allows to go beyond
the limitations inherent to the general theory of the hyperbolic
systems  and to show existence and uniqueness of a global weak
solution under the assumption that $\omega_0\in L^1\cap L^\infty$.
This remarkable result was proved by Yudovich in \cite{Y1}.
Uniqueness follows from the fact that the gradient of the velocity
belongs to all $L^p$ spaces and its $L^p$ norm obeys the slow growth
condition
$$
\sup_{p\geq2}\ \frac{\|\nabla v(t)\|_{L^p}}{p}<\infty.
$$
This framework offers  new perspectives for study  and allows for
example to deal rigorously with vortex patch structures in which the
vorticity takes finitely many values over a bounded region. More
precisely, we say that the initial vorticity is a patch if it takes
a non-zero constant value $c_0$ on a bounded domain $D$ and vanishes
elsewhere. In fact, we normalize so that $c_0 =1$ and the initial
vorticity is the characteristic function of the domain $D.$ Since
vorticity is preserved on particle trajectories, it can be recovered
by the formula
$$\omega(t)=\chi_{D_t},\quad D_t\triangleq\psi(t,D)$$
where  $\psi$ is the flow associated with the velocity field $u$,
that is, the solution of the ODE
$$\psi(t,x)=x+\int_0^tu(\tau,\psi(\tau,x))d\tau.
$$
Remark  that a vortex patch can be seen as a desingularization of a
point  vortex and provides a suitable mathematical model   to study
the effects of  finite vortex cores.

In the vortex patch problem the dynamics of the vorticity is reduced
to the motion of the one-dimensional boundary curve $\partial D_t$
according to the nonlocal equation
\begin{equation}\label{contour}
\partial_tz=\frac{1}{2\pi}\oint_{\partial D_t}\log|z-\xi|\, d\xi,
\end{equation}
which is referred to in the literature as the contour dynamics
equation. The problem of global existence of smooth solutions to the
contour dynamics equation was solved by Chemin \mbox{in \cite{Ch}}
(see \cite{BC} for a short proof). He proved that if the initial
boundary $\partial D$ belongs to the H\"older class $C^{s}, 1<s<2$,
then the boundary $\partial D_t$ remains in the same class for any
positive time. In particular, singularities like corners or cusps
cannot appear in finite time if the boundary of the initial domain
$D$ is smooth.

\quad In general, the motion of the boundary of the patch is
extremely complex, due to the nonlinear effects of the induced
velocity. There is in the literature only one explicit solution in
the simply connected case (that is, the case in which $D$ is a
simply connected domain). It is the Kirchhoff elliptical vortex, in
which $D$ is an ellipse with semi-axes $a$ and $b$. The motion of
this vortex patch is simply a rotation around the center of mass of
$D$ with angular velocity $\Omega=\frac{ab}{(a+b)^2}.$ When $a=b$
one obtains the circular steady solution known as the Rankine
vortex. See \cite{BM, Kirc, L}.

 The behavior of elliptical patches in an external field was first studied by
Chaplygin in \cite{Chapl} for  a pure shear. He proved that the
vortex retains its elliptical shape, rotates with variable angular
velocity and pulsates according to a certain law.  This result was
extended by Kida \cite{Kida} and Neu \cite{Neu} for a uniform
straining field and it was found that  the vortex   exhibits various
types of motion depending on the magnitudes of the strain. For
example, for weaker straining field the vortex can rotate or nutate.
However, for strong strain the vortex elongates indefinitely.
Comprehensive and up-to-date surveys of the analytical techniques
are provided in \cite{ New,Saf}.

\quad A vortex patch that rotates, like the Kirchhoff ellipses, is
called a rotating vortex patch or a V-state (for vortex state). This
terminology was introduced by Deem and Zabusky in \cite{DZ}, where
the contour dynamics equation \eqref{contour} was solved numerically
to show existence of V-states having an $m$-fold symmetry for any
$m=2,3,...$ ($m=2$ are the Kirchhoff ellipses). The reader is urged
to consult \cite{WOZ} where pictures of $m$-fold symmetric V-states
and their ``limiting" shapes are shown. A rigorous study including a
proof of existence of non circular $m$-fold symmetric V-states was
peformed by Burbea in \cite{B}. He used  conformal mappings combined
with a bifurcation analysis. The authors showed recently in
\cite{HMV} that close to the circle of bifurcation the V-states are
convex and  have $C^\infty$ boundaries. Global bifurcation in this
context has not been studied.

\quad The evolution of a system of   $N$ disjoint patches  is in
general very  complicated to analyze and each individual patch
varies in response to the self-induced velocity field  and to that
of other patches. Thus it seems to be very difficult to find
explicit solutions as for the single rotating  patches.  The most
common approximate model used to  track the vortex dynamics  is the
{\it moment model} of Melander, Zabusky and Styczek \cite{Mela}
leading to a self-consistent system of ordinary differential
equations governing the local geometric moments.  Its truncated
model is highly effective to treat.  For example,  the interaction
between several Kirchhoff ellipses which are far apart can be dealt
with. For a valuable discussion about this subject see \cite{New}. A
general review about vortex dynamics can be found in \cite{A}.

\quad The main goal of this paper is to study the rigid-body motion
(that is, rotation with constant angular velocity) for a linear
superposition of finitely many increasing patches. For the sake of
clarity and simplicity we only deal with the case of two bounded
simply connected domains $D_2$ and $D_1$ such that the closure of
$D_2$ is contained in $D_1.$  The initial vorticity is of the form
\begin{equation}\label{exp}
\omega_0=\chi_{D_1}+(\alpha-1)\chi_{D_2},
\end{equation}


so that the parameter $\alpha$ represents the magnitude of the
initial vorticity in the interior domain $D_2$. Clearly the initial
vorticity is $1$ on $D_1 \setminus \overline{D_2}$ and $0$ off
$D_1.$  By the conservation of vorticity along trajectories the
vorticity $\omega_t$ at time $t$ is of the form
\begin{equation}\label{exp2}
\omega_t=\chi_{D_{1t}}+(\alpha-1)\chi_{D_{2t}},
\end{equation}
for some domains $D_{1t}$  and $D_{2t}. $  We say that the solution
$\omega_t$ of \eqref{vorts1} with initial datum \eqref{exp} rotates
uniformly if $\omega_t$ is a uniform rotation of the initial
vorticity, namely
\begin{equation*}\label{exp3}
\omega_t(z-c)= \omega_0(e^{- i t \Omega} (z-c)), \quad z \in
\mathbb{C},
\end{equation*}
where
 $$c = \frac{1}{|D_1|} \int_{D_1} z\, \omega_0(z)\,dA(z)$$
is the center of mass of $\omega_0$ and $\Omega$ is some real number
that has to be found. The problem we consider consists in finding
the couples of domains $D_1$ and $D_2$ such that the multi-vortex
$\omega_t$ rotates.  Notice that for $\alpha=0$ we are considering
an initial doubly connected vortex patch $D_1 \setminus
\overline{D_2}$ and we are asking under what conditions this patch
rotates uniformly (around its center of mass). The question was
raised by Luis Vega and was the initial motivation for this work. An
annulus is the only  known  explicit solution of the doubly
connected vortex patch problem.

 When the
domains are confocal ellipses Flierl and Polvani found in
\cite{Flierl} the complete solutions to this problem by using
elliptical coordinates. In that work the authors dealt with finitely
many ellipses and special attention was devoted to the stability
condition in  the case of two confocal ellipses, thus generalizing
the known result of Love for the Kirchhoff elliptical vortex.

It seems that no other explicit solutions of the form under
consideration can be found in the literature. The aim in this paper
is to solve completely the problem of the rigid-body motion in the
particular case  when the interior interface $\partial D_2$ is an
ellipse. We shall prove that under this constraint the vortices of
Flierl and Polvani are the only solutions. Our first result concerns
Rankine vortices and reads as follows. Let $\Gamma_j = \partial D_j,
\, j=1,2.$

\begin{Theo}\label{impresuld}
Let $\omega_0$ be an initial   vorticity of the form \eqref{exp} and
assume that the solution $\omega_t$ \eqref{exp2} rotates uniformly.
If $\Gamma_1$ or $\Gamma_2$ is a circle then necessarily the other
curve must be a circle with the same center.
\end{Theo}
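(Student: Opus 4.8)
The plan is to reformulate the rotation condition as a set of integral equations on the two boundary curves and then exploit the rigidity of Newtonian (logarithmic) potentials of ellipses and disks. Recall that if $\omega_t$ rotates uniformly with angular velocity $\Omega$ about its center of mass $c$ (which we normalize to $c=0$), then the velocity field must coincide on each interface $\Gamma_j$ with the rigid rotation velocity $i\Omega z$ up to the tangential component; equivalently, on each $\Gamma_j$ one has the scalar equation
\begin{equation}\label{eq:rotcond}
\operatorname{Re}\Big[\big(2\,\partial\bar z\, u(z) - i\Omega\,\bar z\big)\,\dot z\Big]=0,
\end{equation}
where $u$ is the stream-function gradient generated by $\omega_0$ via the logarithmic kernel. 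Writing $u = \nabla^\perp \Psi$ with $\Psi = \frac{1}{2\pi}\log|\cdot| * \omega_0$, condition \eqref{eq:rotcond} says precisely that $\Psi(z) - \frac{\Omega}{4}|z|^2$ is constant on each connected boundary component $\Gamma_1$ and $\Gamma_2$ (with possibly different constants on the two curves). So the whole problem is: for which pairs of nested simply connected domains is the modified stream function locally constant on the boundary?

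First I would treat the case where $\Gamma_1$, the outer curve, is a circle. Then on $\Gamma_1$ the function $\Psi - \frac{\Omega}{4}|z|^2$ is constant; since $|z|^2$ is already constant on a circle, $\Psi$ itself is constant on $\Gamma_1$. Now $\Psi$ is the logarithmic potential of the signed measure $\chi_{D_1} + (\alpha-1)\chi_{D_2}$. Outside $D_1$ this potential is harmonic, decays logarithmically with total mass $|D_1| + (\alpha-1)|D_2|$, and I would use the fact that a logarithmic potential which is constant on the boundary circle of $D_1$ and harmonic outside must be radial outside — this forces the exterior of $D_1$ to ``see'' the configuration as a point mass, but more usefully, the standard mean-value / Newton's theorem argument for the disk shows $\Psi$ inside the annulus $D_1\setminus\overline{D_2}$ is determined and, combined with constancy on $\Gamma_1$, forces the potential of $(\alpha-1)\chi_{D_2}$ to be compatible with radial symmetry; one then invokes the characterization that the only bounded domain whose characteristic function has a logarithmic potential that is radial (constant on circles centered at the origin) outside a disk is a disk concentric with that circle. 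This pins down $\Gamma_2$ as a concentric circle. The reverse implication — $\Gamma_2$ a circle forces $\Gamma_1$ to be a concentric circle — is the more delicate direction and is where I expect the main obstacle.

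For that direction, assume $\Gamma_2$ is a circle of radius $r_2$ centered, say, at the origin (we may place the inner circle's center at the origin, not yet knowing it coincides with $c$). On $\Gamma_2$ the function $\Psi - \frac{\Omega}{4}|z|^2 = \Psi - \frac{\Omega}{4} r_2^2$ is constant, so $\Psi$ is constant on $\Gamma_2$. Inside $D_2$ the potential $\Psi_{D_1}$ of $\chi_{D_1}$ is harmonic while $\Psi_{D_2}$ of $\chi_{D_2}$ satisfies $\Delta \Psi_{D_2} = 1$; since $D_2$ is a disk, $\Psi_{D_2}(z) = \frac14|z|^2 + \text{const}$ inside it, hence harmonic-plus-radial. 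So $\Psi$ constant on $\partial D_2$ plus the Laplacian information forces $\Psi_{D_1}$ to be radial on $\overline{D_2}$, i.e. its restriction to a neighborhood of the inner disk depends only on $|z|$. By harmonicity and the mean value property this means the logarithmic potential of $\chi_{D_1}$ is radial in all of $D_2$; a reflection/unique-continuation argument (the potential of $\chi_{D_1}$ is real-analytic across $\Gamma_1$ off the boundary) then propagates this to show $D_1$ must itself be invariant under rotation about the origin, hence a disk centered at the origin. Finally, matching centers: once both are disks one recomputes $c$ and the rotation condition \eqref{eq:rotcond} to check $\Omega$ is well-defined, and sees the two disks must be concentric (an eccentric annulus does not rotate rigidly — the potential on the outer circle is not constant). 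The technical heart is the step asserting that a logarithmic potential of a bounded domain which is radial on an open set must come from a radially symmetric domain; I would prove this via the Schwarz function / obstacle-problem characterization of quadrature domains, or directly by expanding the potential in a Laurent/multipole series and matching the vanishing of all non-radial moments.
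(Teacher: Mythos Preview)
Your proposal has a genuine gap that appears in both cases: you tacitly assume that the circular interface is centered at the center of rotation. At the outset you normalize the center of mass $c$ to the origin, and then in each case you write ``since $|z|^2$ is already constant on a circle'' to conclude that $\Psi$ itself is constant on the circular $\Gamma_j$. But $|z|^2$ is constant on $\Gamma_j$ only if that circle is centered at the origin, and this is exactly part of what must be proved. In the $\Gamma_2$-circle case you even acknowledge the tension (``not yet knowing it coincides with $c$'') but then proceed as if it did. If $\Gamma_1$ is a circle centered at $z_1\neq 0$, the boundary condition reads $\Psi(z)-\frac{\Omega}{2}|z|^2=\text{const}$ on $\Gamma_1$, and since $|z|^2=r_1^2+2\operatorname{Re}(\bar z_1 z)-|z_1|^2$ there, what is constant on $\Gamma_1$ is $\Psi(z)-\Omega\operatorname{Re}(\bar z_1 z)$, which carries a linear term that destroys the radiality argument. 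A second, independent error: in the $\Gamma_2$-circle case you assert that $\Psi_{D_1}$ is harmonic in $D_2$. It is not: $\Delta\Psi_{D_1}=\chi_{D_1}$, and $D_2\subset D_1$, so $\Delta\Psi_{D_1}=1$ there. This invalidates the step deducing that $\Psi_{D_1}$ is radial on $\overline{D_2}$.

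The paper avoids both difficulties by never assuming the centers coincide. It works with the Cauchy integrals $\gamma_j^{\pm}$ of $\bar z$ and the boundary equations \eqref{bdd1}--\eqref{bd1}. When $\Gamma_2$ is a circle centered at some $z_2$, the equation on $\Gamma_2$ becomes a statement that a certain holomorphic function on $D_1$ has vanishing imaginary part on $\Gamma_2$; the maximum principle then forces $\gamma_1^+$ to be constant, and an inverse-problem lemma (Proposition~\ref{propinv}) identifies $\Gamma_1$ as a circle, after which a direct computation shows the centers agree. When $\Gamma_1$ is a circle, a Laurent expansion of $\gamma_2^-$ on $\Gamma_1$ forces all but two Fourier coefficients to vanish; a second inverse-problem lemma (Proposition~\ref{invext}) gives a quartic Cartesian equation for $\Gamma_2$, and substituting back into the equation on $\Gamma_2$ yields a polynomial identity in $|w|$ whose coefficients must vanish, forcing the off-center moment $a_1$ to be zero. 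Your potential-theoretic intuition is sound, but the argument only goes through once the center question is settled, and that is where the actual work lies.
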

Accordingly,  if one of the curves is a circle and the second one is
not then there is no rotation and the dynamics of the vorticity is
not easy to track.

The second result  deals with the generalized Kirchhoff vortices.
Before stating it we need to introduce a piece of notation. For an
ellipse with semi-axes $a$ and  $b$ define
$$
Q\triangleq\frac{a-b}{a+b}\cdot
$$

\begin{Theo}\label{impresul}
Let $\omega_0$ be an initial   vorticity of  the form \eqref{exp}.
Assume that the interior curve  $\Gamma_2$ is an ellipse  and
$\Gamma_1$ is a Jordan curve of class $C^1$. Then the solution
$\omega_t$ rotates uniformly if and only if the following two
conditions are satisfied.
\begin{enumerate}
\item The curve $\Gamma_1$ is an ellipse with   the same foci as $\Gamma_2.$
\item The numbers $Q_1,Q_2,\alpha$ and the angular velocity  $\Omega$ satisfy,
$$
\Omega=\alpha\,\frac{Q_2^2-1}{4 Q_2^2},\quad
Q_1=Q_2\left(\frac{\alpha}{Q_2^2}+1-\alpha  \right)\quad
\hbox{and}\quad \frac{Q_2^2}{Q_2^2-1}<\alpha<0.
$$
\end{enumerate}
\end{Theo}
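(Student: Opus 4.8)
The plan is to transform the rotation condition into an explicit identity between single-layer (logarithmic) potentials on the boundaries $\Gamma_1$ and $\Gamma_2$, and then to exploit the rigidity of Newtonian potentials of ellipses. Writing the stream function of $\omega_0$ as $\psi_0 = \psi_1 + (\alpha-1)\psi_2$, where $\psi_j$ is the Newtonian potential of $\chi_{D_j}$, the rigid-body rotation of $\omega_t$ with angular velocity $\Omega$ about the center of mass $c$ is equivalent to the pair of boundary conditions
\begin{equation*}
\psi_0(z) - \tfrac{\Omega}{2}\,|z-c|^2 = \text{const on } \Gamma_j, \qquad j=1,2,
\end{equation*}
each constant being possibly different on the two curves. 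This is the standard reformulation (the velocity field $u - \Omega(z-c)^\perp$ must be tangent to both interfaces, which since it is divergence-free and the curves are Jordan is the same as the modified stream function being locally constant on each $\Gamma_j$). The regularity hypothesis $\Gamma_1 \in C^1$ is what legitimizes this equivalence and, later, guarantees that the boundary trace of $\nabla\psi_1$ has the right one-sided limits.

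Next I would use the fact that $\Gamma_2$ is an ellipse, say with foci $\pm c_2$, to compute $\psi_2$ explicitly via elliptic coordinates: inside $D_2$ it is a quadratic polynomial in $(x,y)$, and outside it is a harmonic function depending only on the elliptic radial coordinate, both classical. Plugging this into the boundary condition on $\Gamma_2$ fixes a relation among $\Omega$, $Q_2$, $\alpha$ and the (still unknown) behavior of $\psi_1$ on $\Gamma_2$. Then comes the key step: I restrict attention to $\Gamma_1$. On $\Gamma_1$ the condition reads $\psi_1 + (\alpha-1)\psi_2 - \tfrac{\Omega}{2}|z-c|^2 = \text{const}$; since on and outside $\Gamma_1$ the function $\psi_2$ is already known to be the exterior elliptic potential (a function of the single elliptic coordinate associated with the foci $\pm c_2$), this is an equation saying that $\psi_1$ equals a prescribed real-analytic function plus a harmonic function on $\Gamma_1$. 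The heart of the argument is then a Newtonian-potential rigidity statement: if the single-layer potential of a domain $D_1$ (with $C^1$ boundary) agrees on $\partial D_1$ with a quadratic polynomial minus the exterior elliptic potential of a confocal family, then $\partial D_1$ must itself be a member of that confocal family of ellipses — this is the analogue, in the doubly connected setting, of the classical fact (going back to the ellipsoid potential problem) that ellipses are characterized among Jordan curves by having quadratic interior Newtonian potential.

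The main obstacle I anticipate is precisely this rigidity step: proving that no non-elliptical $C^1$ Jordan curve $\Gamma_1$ can satisfy the boundary identity. The natural route is to differentiate the boundary relation along $\Gamma_1$, obtaining that the full gradient $\nabla\psi_1$ on $\Gamma_1$ is the boundary value of an explicit vector field; this should propagate (by analytic continuation / the Cauchy--Kovalevskaya or Schwarz-function machinery for the exterior domain, where $\Delta\psi_1 = 0$) to force $\Gamma_1$ to coincide with a level curve of the elliptic coordinate, hence a confocal ellipse. An alternative, perhaps cleaner, is to use the Schwarz function $S_1(z)$ of $\Gamma_1$: the relation becomes a functional equation forcing $S_1$ to be algebraic of the form characterizing confocal ellipses, $S(z)$ being a branch of $z$ reflected through $\sqrt{z^2 - c_2^2}$. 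Once confocality is established, conditions (1) and (2) follow by bookkeeping: matching the two boundary constants and the coefficients of the elliptic potentials on $\Gamma_1$ and $\Gamma_2$ yields the stated formulas $\Omega = \alpha(Q_2^2-1)/(4Q_2^2)$ and $Q_1 = Q_2(\alpha/Q_2^2 + 1 - \alpha)$, and the constraint $Q_2^2/(Q_2^2-1) < \alpha < 0$ is exactly the range in which the resulting $Q_1$ lies in $(0,1)$ and $\Gamma_2$ stays strictly inside $\Gamma_1$ — i.e. the admissibility range for a genuine doubly connected configuration with $\overline{D_2}\subset D_1$. The converse direction (confocal ellipses with those parameters do rotate) is a direct verification using the explicit elliptic-coordinate potentials, essentially recovering the Flierl--Polvani computation.
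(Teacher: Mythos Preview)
Your reformulation in terms of the stream function being constant on each interface is correct and is equivalent to the paper's boundary equations. But the order in which you attack the two curves is the opposite of what makes the argument go through, and this is not a cosmetic difference.

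The paper extracts the shape of $\Gamma_1$ from the equation on $\Gamma_2$, not on $\Gamma_1$. On $\Gamma_2$ one knows the \emph{interior} Cauchy data of the ellipse $D_2$, namely $\gamma_2^+(z)=Q_2 e^{-2i\theta}(z-z_2)+\overline{z_2}$, a first-degree polynomial. The boundary identity on $\Gamma_2$ then reads: a known polynomial plus the unknown holomorphic function $\gamma_1^+$ has real part constant on $\Gamma_2$. Solving the Dirichlet problem in $D_2$ for $|z|^2$ (again a quadratic) and applying the maximum principle inside $D_2$ pins down $\gamma_1^+$ as a first-degree polynomial on $D_2$, hence on all of $D_1$ by analytic continuation. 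At that point an elementary inverse-problem lemma (Proposition~\ref{propinv}: if $\gamma^+$ is affine then the curve is an ellipse) gives that $\Gamma_1$ is an ellipse. Your plan instead tries to read off $\Gamma_1$ from the $\Gamma_1$-equation, where the relevant input is the \emph{exterior} potential of $D_2$, which is not polynomial but involves $\sqrt{z^2-c_2^2}$. The ``rigidity step'' you describe (Newtonian-potential or Schwarz-function characterization of confocal ellipses directly from the $\Gamma_1$-equation) is exactly the hard part, and you leave it as a black box; the paper avoids it entirely by the switch of curves.

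A second gap is that you do not address where the centers sit. After one knows both curves are ellipses with parallel axes, the paper still needs a nontrivial lemma (Lemma~\ref{centersorigin}) to show they are both centered at the center of mass: this uses the $\Gamma_1$-equation, symmetrizes under $w\mapsto -w$, solves a Dirichlet problem in the exterior of $\Gamma_1$, and then expands at infinity to obtain polynomial identities in the foci and the center-offset $d=z_1-z_2$; matching the coefficients of $1/w^3$, $1/w^5$, $1/w^7$ forces $d=0$. Only \emph{after} the centers coincide does the $\Gamma_1$-equation collapse to a quadratic in $\gamma_1^-/\gamma_2^-$, whose constancy yields $c_1=c_2$ (confocality) and then the algebraic system for $\Omega$, $Q_1$, $Q_2$, $\alpha$. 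Your sketch jumps straight to confocality and the parameter bookkeeping, so both the ``$\Gamma_1$ is an ellipse'' step and the ``same center'' step are missing concrete arguments.
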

Before giving a brief account of the proofs some remarks are in
order.
\begin{remas}
\begin{enumerate}
\item For the doubly connected patches $(\alpha=0),$ if the interior curve is a non degenerate  ellipse
(different from a circle) then there is no rotation.
\item We believe that Theorem \ref{impresul} holds  when the exterior curve is an
ellipse. That is, if we assume that the exterior interface is an
ellipse, then one should conclude that the interior interface is an
ellipse too.
 This depends on an inverse problem that we have not ben able to solve.
\item The constraints on the parameters detailed in  $(2)$ of Theorem 2 coincide with the  ones given in \cite{Flierl}.
\item We can easily check from the expression of $Q_1$ that $0<Q_1<Q_2.$  This is consistent  with the fact that the ellipses are confocal and  $\Gamma_1$
 lies outside the domain $D_2$ enclosed by $\Gamma_2.$
\end{enumerate}
\end{remas}
We present now an outline of the proofs of our two results. We first
derive the equations governing the motion of the boundaries in the
general framework considered here. The uniform rotation condition is
shown to be equivalent to a system of two steady nonlocal equations
of nonlinear type coupling the Cauchy transforms of the domains
$D_1$ and $D_2$. It is hopeless to solve completely this system
because of its higher degree of complexity. Nevertheless, when the
interior boundary $\Gamma_2$ is an ellipse we obtain an explicit
formula for the Cauchy transform of the unknown domain $D_1$. This
leads to an inverse problem of the following type: one knows the
Cauchy transform of a domain and  one wants to determine the domain.
It is well-known that this is not always possible \cite{Var}. It is,
however, possible in our special situation by using Schwarz
functions and the maximum principle for harmonic functions. Once we
know that $\Gamma_1$ is an ellipse we come back to the system in
order to find the compatibility conditions which will in turn fix
all the involved parameters.

The paper is structured as follows. In section $2$ we  gather some
general facts about rotating vortices.  In section $3$ we derive the
equations of motion of the boundaries via the Cauchy transforms of
the domains $D_1$ and $D_2$. In section $4$ we review some useful
tools from complex analysis and potential theory and we discuss some
inverse problems. The last section is devoted to the proofs of the
main results.
\section{Preliminaries on rotating vortices}
In this section we discuss some elementary facts on vortex dynamics
for incompressible Euler equations. Recall that vorticity
$\omega=\partial_1 v^2-\partial_2 v^1$ satisfies the transport
equation \eqref{vorts1}.

We shall focus on the vortices whose  dynamics undergoes  a planar
rigid-body motion. In this case the motion can be described by a
combination of translations and rotations.  For the sake of
simplicity we restrict the study to the group of rotations.
\begin{Defin}\label{defa1}
Let $\omega_0\in L^1\cap L^\infty.$ We say that $\omega_0$ is a
rotating vorticity if the solution $\omega$ of equation
\eqref{vorts1} with initial condition $\omega_0$ is given by
$$
\omega(t,x)=\omega_0(\bold{R}_{x_0,-\theta(t)}x),\quad\quad x\in
\RR^2.
$$
Here we denote by $\bold{R}_{x_0,\theta(t)}$  the planar rotation of
center  $x_0$ and \mbox{angle $\theta(t)$}. Moreover we assume that
the function $t\mapsto\theta(t)$ is smooth and non-constant.
\end{Defin}
In  the vortex patch class this definition reduces to the following:
 $\omega_0=\chi_{D}$, with $D$ a bounded domain, is a rotating vorticity (or, equivalently, $D$ is a rotating
vortex patch or V-state) if and only if
$$
\omega(t)=\chi_{D_t}\quad\hbox{and}\quad D_t\triangleq
\bold{R}_{x_0,\theta(t)}D.
$$
In the preceding definition the vorticity $\omega_0$ is assumed to
be bounded and integrable in order to get a unique global solution
according to Yudovich's theorem.
The velocity  dynamics in the framework of rotating vortices is
described as follows.
\begin{prop}\label{vel-eq}
Let $\omega_0$ be a rotating vorticity as in Definition
${\ref{defa1}}$. Then the velocity $v(t)$ can be recovered from the
initial velocity $v_0$ according to the formula
$$
v(t,x)=\bold{R}_{x_0,\theta(t)} v_0(\bold{R}_{x_0,-\theta(t)}x).
$$
\end{prop}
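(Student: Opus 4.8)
The plan is to recover the velocity from the vorticity through the Biot--Savart law and then exploit the covariance of its kernel under rotations. Since $\omega_0\in L^1\cap L^\infty$ with bounded support, Yudovich's theorem provides a unique global solution, and at each time the velocity is reconstructed by
\[
v(t,x)=(K*\omega(t,\cdot))(x),\qquad K(y)=\frac{1}{2\pi}\frac{y^\perp}{|y|^2},\quad y^\perp=(-y_2,y_1),
\]
this being the only divergence--free field with curl equal to $\omega(t)$ that decays at infinity; in particular $v_0=K*\omega_0$.

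Next I would insert the rotation formula for $\omega(t,\cdot)$ from Definition \ref{defa1}. Writing $\bold{R}_{x_0,\theta}\,y=x_0+R_\theta(y-x_0)$, where $R_\theta$ denotes the linear rotation of angle $\theta$ about the origin, I change variables $y=\bold{R}_{x_0,\theta(t)}\eta$ in the convolution integral; this change is measure preserving and turns $\omega_0(\bold{R}_{x_0,-\theta(t)}y)$ into $\omega_0(\eta)$. Setting $\tilde x=\bold{R}_{x_0,-\theta(t)}x$, one checks that $x-y=R_{\theta(t)}(\tilde x-\eta)$, so that the whole matter reduces to the two elementary identities $|R_\theta z|=|z|$ and $(R_\theta z)^\perp=R_\theta(z^\perp)$, valid for proper rotations; combined they give $K(R_\theta z)=R_\theta K(z)$. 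Hence
\[
v(t,x)=R_{\theta(t)}\int K(\tilde x-\eta)\,\omega_0(\eta)\,dA(\eta)=R_{\theta(t)}\,v_0(\tilde x),
\]
which is exactly the asserted formula, the rotation acting on the velocity vector being understood as its linear part $R_{\theta(t)}$.

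The only genuinely delicate points are bookkeeping ones: the translation by $x_0$ has to drop out, which it does because $K$ depends only on the difference $x-y$; and one must get the orientation right in $(R_\theta z)^\perp=R_\theta(z^\perp)$, where it is precisely the use of a \emph{proper} rotation that prevents a spurious sign. Alternatively, and perhaps more in the spirit of the PDE, one may avoid the explicit kernel and argue by uniqueness: the candidate field $\tilde v(t,x):=R_{\theta(t)}v_0(\bold{R}_{x_0,-\theta(t)}x)$ is divergence free, has vorticity $\mathrm{curl}\,\tilde v(t,\cdot)=\omega_0\circ\bold{R}_{x_0,-\theta(t)}=\omega(t,\cdot)$ (curl being a rotation--covariant scalar in two dimensions, again with the correct sign for proper rotations), and decays at infinity; since $v(t,\cdot)$ shares these three properties and such a field is unique, $v(t,\cdot)=\tilde v(t,\cdot)$. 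I would present the Biot--Savart computation as the main line and record the uniqueness argument as a remark.
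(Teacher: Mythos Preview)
Your proof is correct. The main line you chose---pushing the rotation through the Biot--Savart kernel via the covariance $K(R_\theta z)=R_\theta K(z)$---is a genuinely different route from the paper's. The paper does not touch the explicit kernel at all: it starts from $\Delta v(t,\cdot)=\nabla^\perp\omega(t,\cdot)$, computes
\[
\nabla^\perp\omega(t,x)=\bold{R}_{x_0,\theta(t)}\nabla^\perp\omega_0(\bold{R}_{x_0,-\theta(t)}x)=\Delta\big(\bold{R}_{x_0,\theta(t)}v_0(\bold{R}_{x_0,-\theta(t)}x)\big),
\]
and then concludes by noting that the difference of the two candidate velocity fields is harmonic and decays at infinity, hence vanishes. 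In other words, the paper's argument is essentially what you recorded as your \emph{alternative} remark (phrased through the Laplacian rather than through ``divergence free plus curl plus decay''), while your primary argument is a direct change-of-variables computation. Your route has the advantage of being completely explicit and of isolating exactly where the proper-rotation sign matters; the paper's route is shorter and avoids manipulating the singular kernel. One small quibble: you wrote ``with bounded support'', but Definition~\ref{defa1} only assumes $\omega_0\in L^1\cap L^\infty$; this does not affect the argument, since the Biot--Savart integral and its rotation covariance are already valid under that hypothesis.
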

\begin{proof}
We shall use the formula
$$
\Delta v(t,x)=\nabla^\perp\omega(t,x).
$$
Performing some algebraic computations we get
\begin{eqnarray*}
\nabla^\perp\omega(t,x)&=&\bold{R}_{x_0,\theta(t)}\nabla^\perp\omega_0(\bold{R}_{x_0,-\theta(t)}x)\\
&=& \bold{R}_{x_0,\theta(t)}\Delta v_0(\bold{R}_{x_0,-\theta(t)}x)\\
&=& \Delta\big(\bold{R}_{x_0,\theta(t)}
v_0(\bold{R}_{x_0,-\theta(t)}x)\big).
\end{eqnarray*}
Thus the velocity fields  $x\mapsto \bold{R}_{x_0,\theta(t)}
v_0(\bold{R}_{x_0,-\theta(t)}x)$  and $v$ differ by a harmonic
function and both decay at infinity. Hence they are equal.
\end{proof}

\begin{Defin}
Let $\omega$ be a compactly supported solution of \eqref{vorts1}
with non-zero total mass
$$m(t)\triangleq {\int_{\RR^2}\omega(t,x)dx }.$$
Define the center of mass of  $\omega$ as
$$
X(t)=\frac{1}{m(t)}{\int_{\RR^2}x\,\omega(t,x)\,dx}.
$$
\end{Defin}
The total mass and the center of mass are invariants of the motion.
We include a short proof of this classical fact for the sake of the
reader.
\begin{prop}\label{cons1}
Let $\omega_0$ be a smooth compactly supported initial vorticity
with non zero total mass. Then for any positive time $t$
$$
m(t)=m(0)\quad \hbox{and}\quad X(t)=X(0).
$$
\end{prop}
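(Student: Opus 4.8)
The plan is to verify both invariances by differentiating under the integral sign and using the transport equation $\partial_t\omega + v\cdot\nabla\omega = 0$ together with $\operatorname{div} v = 0$. First I would compute
$$
\frac{d}{dt} m(t) = \int_{\RR^2} \partial_t\omega(t,x)\, dx = -\int_{\RR^2} v\cdot\nabla\omega\, dx = -\int_{\RR^2} \operatorname{div}(\omega v)\, dx = 0,
$$
where the second equality rewrites $v\cdot\nabla\omega = \operatorname{div}(\omega v)$ using incompressibility, and the last equality is the divergence theorem, valid because $\omega(t,\cdot)$ is compactly supported (the support is transported by the flow of the smooth, divergence-free field $v$, hence stays compact on any finite time interval). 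Differentiating under the integral is justified since $\omega$ is smooth and compactly supported.

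For the center of mass, since $m(t)=m(0)$ is a nonzero constant it suffices to show $\frac{d}{dt}\int_{\RR^2} x\,\omega(t,x)\,dx = 0$ componentwise. For the $k$-th component $x_k$ I would write, as above,
$$
\frac{d}{dt}\int_{\RR^2} x_k\,\omega\, dx = -\int_{\RR^2} x_k\, \operatorname{div}(\omega v)\, dx = \int_{\RR^2} \nabla x_k\cdot(\omega v)\, dx = \int_{\RR^2} \omega\, v^k\, dx,
$$
integrating by parts and discarding the boundary term by compact support. So the claim reduces to showing $\int_{\RR^2}\omega\, v^k\, dx = 0$ for $k=1,2$. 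This is the one genuinely nontrivial point: it is the classical statement that the total linear momentum (the "impulse" up to the usual rotation) of a compactly supported 2D vorticity field vanishes. I would establish it by expressing $v$ via the Biot--Savart law $v = \nabla^\perp(\Delta^{-1}\omega)$, i.e. $v(x) = \frac{1}{2\pi}\int \frac{(x-y)^\perp}{|x-y|^2}\,\omega(y)\,dy$, and then showing $\int\int \omega(x)\,\frac{(x-y)^\perp_k}{|x-y|^2}\,\omega(x)\omega(y)\,dx\,dy$ — more precisely $\int \omega(x) v^k(x)\,dx$ — is zero by the antisymmetry of the kernel under swapping $x \leftrightarrow y$: the kernel $K(x,y) = \frac{(x-y)^\perp}{|x-y|^2}$ is odd, $K(y,x) = -K(x,y)$, so symmetrizing the double integral $\int\int K^k(x,y)\,\omega(x)\omega(y)\,dx\,dy$ forces it to vanish. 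Care is needed only with the $1/|x-y|$ local singularity, which is integrable in the plane against a bounded compactly supported density, so Fubini and the symmetrization are legitimate.

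The main obstacle is precisely this last step: one must be slightly careful that $\int \omega v^k\,dx$ is well defined and that the symmetrization argument applies despite the singular Biot--Savart kernel; everything else is routine differentiation under the integral and the divergence theorem, all licensed by the smoothness and compact support of $\omega$ guaranteed in the hypothesis. I would state the integrability of the kernel explicitly and invoke Fubini before performing the $x\leftrightarrow y$ swap.
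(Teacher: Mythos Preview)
Your proposal is correct. Both you and the paper reduce the center-of-mass claim to $\int_{\RR^2}\omega\,v^k\,dx=0$, but you establish this differently. The paper substitutes $\omega=\partial_1 v^2-\partial_2 v^1$ and integrates by parts against $v^k$, using incompressibility to shuffle derivatives until one reaches $\int\partial_j(v^j)^2\,dx=0$; this avoids the Biot--Savart kernel entirely but tacitly relies on the $1/|x|$ decay of $v$ to kill boundary terms at infinity. Your route through the explicit kernel and its oddness under $x\leftrightarrow y$ is equally standard and has the advantage that the integrability issue is confronted head-on: the $1/|x-y|$ singularity is locally integrable in the plane and the supports are compact, so Fubini and the symmetrization go through cleanly. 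For the mass, the paper argues by change of variables (the flow is measure-preserving) rather than by differentiating and applying the divergence theorem; both are routine.
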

\begin{proof}
The conservation of mass follows easily from the characteristics
method. The vorticity $\omega(t)$ can be expressed in terms of its
initial value and the flow $\psi$ according to the formula
$$
\omega(t,x)=\omega_0(\psi^{-1}(t,x)).
$$
The incompressibility condition entails that the flow preserves
Lebesgue measure and thus the mass is conserved in time.

The invariance of the center of mass follows from the constancy of
the functions
$$
f_j(t)\triangleq \int_{\RR^2}x_j\,\omega(t,x)\,dx, \quad j=1,2.
$$
Differentiation of the functions $f_j$ with respect to the time
variable combined with the vorticity equation \eqref{vorts1} yields
\begin{eqnarray*}
f_j^\prime(t)&=&\int_{\RR^2}x_j\,\partial_t\omega(t,x)\,dx\\
&=&-\int_{\RR^2}x_j\, (v\cdot\nabla \omega)(t,x)\,dx\\
&=&\int_{\RR^2}v^j(t,x)\, \omega(t,x)\,dx.
\end{eqnarray*}
Since $\omega=\partial_1 v^2-\partial_2 v^1,$ an integration  by
parts yields
\begin{eqnarray*}
f_1^\prime(t)&=&\int_{\RR^2}v^1(t,x) \, (\partial_1 v^2(t,x)-\partial_2 v^1 (t,x))\,dx\\
&=&\int_{\RR^2}v^1(t,x)\, \partial_1 v^2(t,x)\,dx\\
&=&-\int_{\RR^2}\partial_1v^1(t,x)\,v^2(t,x)\,dx\\
&=&\int_{\RR^2}\partial_2v^2(t,x)\,v^2(t,x)\,dx\\
&=&0.
\end{eqnarray*}
\end{proof}
We now explore the relationship between center of rotation and
center of mass.
\begin{prop}\label{centremass}
Let $\omega_0=\chi_{D}$ be a vortex patch with non zero total mass,
which rotates around the point $x_0,$ . Then necessarily $x_0$ is
the center of mass of the domain $D.$
\end{prop}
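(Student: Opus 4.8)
The plan is to play the conservation of the center of mass (Proposition~\ref{cons1}) off against the elementary fact that a rigid rotation moves the center of mass of a set in a completely explicit way. Write the planar rotation as $\bold{R}_{x_0,\theta}(y)=x_0+R_\theta(y-x_0)$, where $R_\theta$ denotes the linear rotation of the plane by the angle $\theta$ about the origin. By Definition~\ref{defa1} the hypothesis means $\omega(t)=\chi_{D_t}$ with $D_t=\bold{R}_{x_0,\theta(t)}D$.

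First I would compute the center of mass $X(t)$ of $D_t$. Rotations preserve Lebesgue measure, so $|D_t|=|D|$, and the change of variables $y=\bold{R}_{x_0,\theta(t)}z$ gives
$$X(t)=\frac{1}{|D|}\int_{D}\bigl(x_0+R_{\theta(t)}(z-x_0)\bigr)\,dz=x_0+R_{\theta(t)}\bigl(X(0)-x_0\bigr),$$
where $X(0)$ is precisely the center of mass of $D$. On the other hand, the patch $\chi_D$ lies in $L^1\cap L^\infty$ and, by Chemin's theorem, $\partial D_t$ stays of class $C^1$ for all times, so the computation carried out in the proof of Proposition~\ref{cons1} applies here (or one argues by a standard approximation of $\chi_D$ by smooth compactly supported data), yielding $X(t)=X(0)$ for every $t\geq 0$. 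Comparing the two expressions for $X(t)$ we obtain
$$\bigl(R_{\theta(t)}-\mathrm{Id}\bigr)\bigl(X(0)-x_0\bigr)=0\qquad\text{for all }t\geq 0.$$

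To conclude, since $t\mapsto\theta(t)$ is smooth and non-constant its image is a non-degenerate interval, hence not contained in the discrete set $2\pi\mathbb{Z}$; so there is a time $t_*$ with $\theta(t_*)\notin 2\pi\mathbb{Z}$, and for such an angle the only fixed vector of $R_{\theta(t_*)}$ is the zero vector, i.e. $R_{\theta(t_*)}-\mathrm{Id}$ is invertible. Therefore $X(0)-x_0=0$, which is exactly the assertion that $x_0$ is the center of mass of $D$. The only points that require a word of care — and the closest thing to an obstacle here — are the extension of the center-of-mass conservation from smooth data to the merely bounded patch (handled by the $C^1$ regularity of $\partial D_t$ or by density) and the observation that a non-constant smooth $\theta$ must take a value outside $2\pi\mathbb{Z}$; both are routine. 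An alternative would be to use Proposition~\ref{vel-eq} to track the velocity field directly, but the center-of-mass argument is considerably shorter.
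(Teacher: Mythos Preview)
Your proof is correct and follows essentially the same route as the paper: compute $X(t)$ by the change of variables $y=\bold{R}_{x_0,\theta(t)}z$ to obtain $X(t)=\bold{R}_{x_0,\theta(t)}X(0)$, invoke the conservation $X(t)=X(0)$, and conclude that $X(0)$ is the fixed point of the rotation. You are slightly more careful than the paper in spelling out why a non-constant smooth $\theta$ must hit a value outside $2\pi\mathbb{Z}$ and in flagging the passage from smooth data to patches in Proposition~\ref{cons1}, but these are routine elaborations of the same argument.
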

\begin{proof}
By a change of variables
\begin{eqnarray*}
X(t)&=&\frac{1}{m(0)}\int_{\RR^2}x\,\omega_0(\bold{R}_{-\theta(t),x_0} x)dx\\
&=&\frac{1}{m(0)}\int_{\RR^2}(\bold{R}_{\theta(t),x_0}x)\omega_0(x)dx\\
&=&\frac{1}{m(0)}\bold{R}_{\theta(t),x_0}\left(\int_{\RR^2}x\omega_0(x)dx\right)\\
&=&\bold{R}_{\theta(t),x_0} X(0).
\end{eqnarray*}
Since $X(t)=X(0)$ by Proposition \eqref{cons1},  $X(0)$ is fixed by
the rotation and thus $X(0)=x_0$, as claimed.
\end{proof}
The last result of this section is the nontrivial fact that any
rotating patch must have a constant angular velocity.
\begin{prop}\label{pruss}
Let $\omega_0=\chi_{D}$ be a rotating vortex patch different from
the Rankine vortex.  Then the angular velocity is necessarily
constant, that is,
$$
\theta(t)= t \Omega+ \theta_0,  \quad  t\geq0,
$$
for some constants $\Omega$ and $\theta_0.$
\end{prop}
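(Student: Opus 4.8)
The plan is to extract the constancy of $\dot\theta$ from two structural symmetries of the Euler system --- invariance under time translations and covariance under rotations --- together with Yudovich's uniqueness theorem, so that the claim collapses to Cauchy's functional equation. I first normalise: after a translation we may take the center of rotation to be $x_0=0$ (by Propositions \ref{cons1} and \ref{centremass} this is the center of mass, but all that matters is that it is fixed by the motion). Write $\bold{R}_\alpha$ for the rotation by the angle $\alpha$ about the origin, so that Definition \ref{defa1} reads $\omega(t,x)=\omega_0(\bold{R}_{-\theta(t)}x)$, with $\theta$ smooth and non-constant.

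Fix $s\ge0$ and compare two solutions of \eqref{vorts1} sharing the datum $\omega(s,\cdot)\in L^1\cap L^\infty$: the time-translate $\widetilde\omega(t,x):=\omega(t+s,x)$, which solves \eqref{vorts1} because the system is autonomous, and the rotate $\widehat\omega(t,x):=\omega(t,\bold{R}_{-\theta(s)}x)$, which solves \eqref{vorts1} by the rotational covariance of the Euler equations --- the same computation as in the proof of Proposition \ref{vel-eq}, now with the constant angle $-\theta(s)$ in place of $\theta(t)$. Yudovich's theorem forces $\widetilde\omega\equiv\widehat\omega$, that is
\[
\omega_0\big(\bold{R}_{-\theta(t+s)}x\big)=\omega_0\big(\bold{R}_{-\theta(t)-\theta(s)}x\big)\qquad\text{for all }t,s\ge0\text{ and a.e. }x.
\]
Replacing $x$ by $\bold{R}_{\theta(t)+\theta(s)}x$, this says that $\omega_0$ is invariant (a.e.) under the rotation by the angle $\gamma(t,s):=\theta(t+s)-\theta(t)-\theta(s)$, for all $t,s\ge0$.

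Consider now the stabiliser $G:=\{\alpha\in\RR:\ \omega_0\circ\bold{R}_\alpha=\omega_0\ \text{a.e.}\}$, so that $\gamma(t,s)\in G$ for all $t,s$. Since rotations act strongly continuously on $L^1$, $G$ is a closed subgroup of $(\RR,+)$ containing $2\pi\ZZ$; hence $G$ is either discrete (of the form $a\ZZ$) or all of $\RR$. If $G=\RR$, then $\chi_D$ is radially symmetric about $x_0$, so $v_0$ is a purely circular field and $\omega_0$ is a stationary solution of \eqref{vorts1} --- the Rankine vortex --- excluded by hypothesis. So $G$ is discrete. As $\gamma$ is continuous (even smooth, $\theta$ being smooth), takes values in the discrete set $G$, and $[0,\infty)^2$ is connected, $\gamma$ is constant, equal to $\gamma(0,0)=-\theta(0)$. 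Setting $\phi(t):=\theta(t)-\theta(0)$, the identity $\gamma\equiv-\theta(0)$ becomes $\phi(t+s)=\phi(t)+\phi(s)$ for all $t,s\ge0$; a continuous additive function is linear, so $\phi(t)=\Omega\,t$ for some $\Omega\in\RR$, whence $\theta(t)=\Omega\,t+\theta_0$ with $\theta_0:=\theta(0)$, as claimed.

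I do not expect a serious analytic obstacle --- the argument is soft --- the two steps needing a moment's care being the (routine) verification that $\widehat\omega$ is a bona fide Euler solution and the dichotomy for $G$, which is precisely where the exclusion of the Rankine vortex is used; in particular the approach sidesteps the fact that $\omega_0=\chi_D$ is only bounded, so that \eqref{vorts1} cannot be differentiated against it, using nothing beyond $\omega_0\in L^1\cap L^\infty$ and Yudovich's theory. A more computational alternative, which does touch the boundary, starts from the kinematic condition on $\partial D_t=\bold{R}_{\theta(t)}\partial D$: equating at $\bold{R}_{\theta(t)}z_0$ the normal component of the boundary velocity $\dot\theta(t)\,(\bold{R}_{\theta(t)}z_0)^\perp$ with that of $v(t,\cdot)=\bold{R}_{\theta(t)}v_0(\bold{R}_{-\theta(t)}\cdot)$, and undoing the rotation, leads to $\dot\theta(t)\,\big(z_0^\perp\cdot n(z_0)\big)=v_0(z_0)\cdot n(z_0)$ for every $z_0\in\partial D$ --- a relation with $t$-independent right-hand side whose coefficient $z_0^\perp\cdot n(z_0)$ vanishes identically on $\partial D$ only when $\partial D$ consists of circular arcs centered at $x_0$, that is, in the Rankine case.
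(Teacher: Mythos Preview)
Your argument is correct and genuinely different from the paper's. The paper proceeds via the kinematic boundary equation: parametrising $\partial D_t=\bold{R}_{\theta(t)}\partial D$ and inserting Proposition~\ref{vel-eq} into the normal-velocity condition \eqref{boundary} gives
\[
\dot\theta(t)\,\hbox{Re}\big(\gamma_0\overline{\gamma_0^\prime}\big)=\hbox{Im}\big(v_0(\gamma_0)\overline{\gamma_0^\prime}\big),
\]
whose right-hand side is $t$-independent; since $\hbox{Re}(\gamma_0\overline{\gamma_0^\prime})=\tfrac12\frac{d}{ds}|\gamma_0|^2$, either this coefficient is somewhere nonzero and $\dot\theta$ is constant, or $|\gamma_0|$ is constant and $D$ is a disc. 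This is exactly the ``computational alternative'' you sketch in your last paragraph.

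Your main route instead exploits the autonomy and rotational covariance of the Euler system together with Yudovich uniqueness to force $\theta(t+s)-\theta(t)-\theta(s)$ into the (closed, hence discrete-or-all) rotational stabiliser of $\omega_0$, and then closes with Cauchy's functional equation. This buys generality: nothing in the argument uses that $\omega_0$ is a characteristic function or that $\partial D$ is smooth, so it applies verbatim to any rotating $\omega_0\in L^1\cap L^\infty$ once ``not the Rankine vortex'' is read as ``not radially symmetric''. The paper's proof, by contrast, is more elementary --- no second appeal to uniqueness, no subgroup dichotomy --- and is already written in the boundary-equation language of Section~3 that drives the rest of the paper; it does, however, implicitly use a $C^1$ parametrisation of $\partial D$.
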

\begin{proof}
Let $s\mapsto \gamma_t(s)$ be a parametrization of the boundary of
the patch $D_t\triangleq \bold{R}_{x_0,\theta(t)}D.$ Then, as we
will prove in the next section, the motion of the boundary $\partial
D_t$ satisfies equation \eqref{boundary}, namely
$$
\hbox{Im}\big((\partial_t\gamma_t-v(t,\gamma_t)\overline{\gamma_t^\prime}\big)=0.
$$
Here the prime denotes derivative with respect to the $s$ variable.
This equation leads to \eqref{triss1}, that is,
$$
\dot{\theta}(t)\hbox{Re}\left(\gamma_0\overline{\gamma_0^\prime}\right)=\hbox{Im}\left(\,
v_0(\gamma_0)\,\overline{\gamma_0^\prime}  \right),
$$
which is equivalent to
$$
\frac{\dot{\theta}(t)}{2}\frac{d}{ds}|\gamma_0(s)|^2=\hbox{Im}\left(\,
v_0(\gamma_0)\,\overline{\gamma_0^\prime}  \right).
$$
If there exists some $s$ with $\frac{d}{ds}|\gamma_0(s)|^2\neq 0$
then, since the right-hand side does not depend on the time
variable, we conclude that $\dot{\theta}(t)$ is constant. Otherwise,
$\frac{d}{ds}|\gamma_0(s)|^2$ vanishes everywhere, which tells us
that the initial domain is a disc. Thus our vortex is the Rankine
vortex, which rotates with any angular velocity.
\end{proof}
\section{Boundary motion}
We shall in what follows describe the motion of a piecewise  constant vorticity in the plane.
  Let $D_j,  \; 1 \le j \le n, $ be a family of simply connected domains such that for each $j$
the closure of $D_{j+1}$ is contained in $D_j$.  Assume moreover
that the boundary $\Gamma_j$ of $D_j$ is a Jordan curve of class
$C^1,$ $ \, 1 \le j \le n .$  We set $E_j=D_{j}\backslash
\overline{D_{j+1}}$ for $1 \le j \le n-1$ and $E_n= D_n.$  Let
$\alpha_j, 1 \le j \le n,$ be a family of real numbers such that
$\alpha_1\neq 0$ and $\alpha_j\neq \alpha_{j+1}$ for $1 \le j \le
n-1$. Now take an initial vorticity of the form
\begin{equation}\label{multiconss}
\omega_0=\sum_{j=1}^n\alpha_j\,\chi_{E_j},
\end{equation}
where $\chi_{E_j}$ denotes the characteristic function of $E_j.$
 Since the vorticity is conserved along the particle trajectories, the  initial structure of the  vorticity
  is preserved in time. Thus the vorticity at time $t$ has the form
\begin{equation}\label{multicon11}
\omega(t)=\sum_{j=1}^n\alpha_j\,\chi_{E_{j,t}}, \quad
E_{j,t}=\psi(t,E_j),
\end{equation}
where $\psi$ is the flow map
\begin{equation}\label{flow}
\psi(t,x)=x+\int_0^tv(\tau,\psi(\tau,x))d\tau
\end{equation}
 associated with the velocity $v.$  We will describe the dynamics
of the interfaces $\Gamma_{j,t}\triangleq \psi(t, \Gamma_j), 1 \le j
\le n .$  In particular, the case $n=1$ gives the equation for the
boundary motion of a simply connected vortex patch and the case
$n=2$ and $\alpha_1 = 1, \alpha_2 =0,$ provides the system of two
equations for the boundary of a doubly connected rotating vortex
patch. There are at least two natural ways to derive the equations
of the boundary.

\subsection{First approach}
The motion of the interfaces $\Gamma_{j,t},\;  1 \le j \le n ,$ is
subject to the kinematic constraint that the boundary is transported
with the flow. In particular, it is a material surface and thus
there is no flux matter across the boundary. Since we assume that
the interfaces $\Gamma_j$ are $C^1$-smooth we can express
$\Gamma_j$, for each fixed $j$ satisfying $1 \le j \le n$, as
\begin{equation*}\label{gammaj}
\Gamma_j = \{x \in \mathbb{R}^2 : \varphi_j(x) = 0 \},
\end{equation*}
where $\varphi_j$ is a real function of class $C^1$ on the plane,
such that $\nabla \varphi_j(x) \neq 0, \; x \in \Gamma_j,$
$\varphi_j < 0$ on $D_j$ and $\varphi_j > 0$ on $ \mathbb{R}^2
\setminus \overline{D_j}.$  One says that $\varphi_j$ is a defining
function for $\Gamma_j.$ Set
\begin{equation*}\label{fijt}
F_j(t,x)= \varphi_j(\psi^{-1}(t,x)),
\end{equation*}
where $\psi$ is the flow \eqref{flow}. Then $x \rightarrow F_j(t,x)$
is a defining function for $\Gamma_{j,t} = \psi(t,\Gamma_j).$ Since
by definition $F_j(t,x)$ is transported by the flow , it satisfies
the transport equation
 $$
 \partial_t F+v\cdot\nabla F=0.
 $$
 Now, let $\gamma_t(s)$ be a parametrization of  $\Gamma_{1,t},$ continuously differentiable in $t$,
  and let $\vec{n}_t$ be the unit outward normal vector to $\Gamma_{1,t}.$ Differentiating
the equation $F(t,\gamma_t(s))=0$ with respect to $t$ yields
 $$
 \partial_t F+\partial_t\gamma_t\cdot\nabla F=0.
 $$
 Since for $x \in \Gamma_{j,t}$ the vector  $\nabla F(t,x)$ is perpendicular to  $\Gamma_{j,t}$,
 we obtain
 \begin{equation}\label{boundarys}
 (\partial_t\gamma_t-v(t,\gamma_t))\cdot\vec{n}_t=0.
 \end{equation}
 The meaning of \eqref{boundarys} is that the velocity of the boundary and the the velocity of the fluid particle occupying
 the same position   have the same normal components.
 We observe that equation \eqref{boundarys} can be written in the complex form
 \begin{equation}\label{boundary}
 \hbox{Im}  \Big\{(\partial_t\gamma_t-v(t,\gamma_t))\overline{\gamma_t^\prime}\Big\}=0
 \end{equation}
 where the "prime" denotes  derivative with respect to the $s$ variable.

 We now take a closer look at the case of a rotating doubly connected vortex patch.
 Assume that the two interfaces rotate with the same angular velocity $\dot{\theta}(t)$ around some point,
  which can be assumed to be the origin. Denote by $\gamma_0$ a parametrization of one of the initial interfaces.
   Then  $\gamma_t(s)=e^{i\theta(t)}\gamma_0(s)$ is a parametrization of the transported interface at time $t$
   and, on one hand, we get
 \begin{eqnarray*}
  \hbox{Im}  (\partial_t\gamma_t\overline{\gamma_t^\prime})&=&\dot{\theta}(t)\,\hbox{Re}(\gamma_t\overline{\gamma_t^\prime})\\
  &=&\dot{\theta}(t)\,\hbox{Re}(\gamma_0\overline{\gamma_0^\prime}).
 \end{eqnarray*}
 By Proposition \ref{vel-eq}
\begin{eqnarray*}
v(t,\gamma_t)&=&e^{i\theta(t)} v_0(e^{-i\theta(t)}\gamma_t)\\
&=&e^{i\theta(t)} v_0(\gamma_0).
\end{eqnarray*}
Hence, on the other hand,
$$
 \hbox{Im}  (v(t,\gamma_t)\overline{\gamma_t^\prime})= \hbox{Im}  \big(v_0(\gamma_0)\overline{\gamma_0^\prime}\,\big).
$$
Therefore \eqref{boundary} becomes
\begin{equation}\label{triss1}
\hbox{Im}  \big(v_0(\gamma_0)\overline{\gamma_0^\prime}\,\big) =
\hbox{Im}  (v(t,\gamma_t)\overline{\gamma_t^\prime}) =  \hbox{Im}
(\partial_t\gamma_t\overline{\gamma_t^\prime})= \dot{\theta}(t)
\hbox{Re}(\gamma_0\overline{\gamma_0^\prime}) .
\end{equation}
It follows from the identity above, as we remarked before, that the
angular velocity $\dot{\theta}(t)\equiv \Omega $ is constant.
 Recall
that  $\partial_z=\frac12(\partial_x-i\partial_y)$ and let $\Psi$
stand for the stream function at time $0$, namely,
\begin{equation*}\label{logpot0}
\Psi(z)=\frac{1}{2\pi}\int_{\RR^2}\omega_0(\xi) \log|z-\xi|\,d\xi,
\quad z \in \mathbb{C}.
\end{equation*}
Then $v_0(z) = 2 i \partial_z\Psi(\gamma_0)$ and so
\begin{equation}\label{Imv}
 \hbox{Im}  \big(v_0(\gamma_0)\overline{\gamma_0^\prime}\,\big) = 2  \hbox{Re} \big(
 \partial_z\Psi(\gamma_0) \gamma_0 ' \big).
\end{equation}
Combining \eqref{triss1} and \eqref{Imv} we conclude that the
initial interfaces
$\Gamma\triangleq\displaystyle{\cup_{j=1}^n\Gamma_{j}}$ satisfy the
system of $n$ equations
\begin{equation}\label{eq1}
2\hbox{Re}\big\{\partial_z\Psi(z)\, z^\prime \big\}= \Omega
\,\hbox{Re}\big\{ \overline{z}\,z^\prime \big\},\quad z\in \Gamma,
\end{equation}
where $z^\prime$ denotes  a tangent vector to the boundary at the
point $z$.

\subsection{Second approach}
We will give another way to derive the equation \eqref{eq1}, which
consists in analyzing directly the vorticity equation. According to
the Definition \ref{defa1} and assuming that the center of rotation
is the origin a rotating vorticity has the structure
$\omega(t,x)=\omega_0(\bold{R}_{-\theta(t)} x).$
 Straightforward computations show that
$$
\nabla\omega(t,x)=\bold{R}_{\theta(t)}(\nabla\omega_0(\bold{R}_{-\theta(t)}x)).
$$
Combining this identity with Proposition \ref{vel-eq} yields
\begin{eqnarray*}
v\cdot\nabla\omega(t,x)&=&\langle \bold{R}_{\theta(t)}
v_0(\bold{R}_{-\theta(t)} x),
\bold{R}_{\theta(t)}(\nabla\omega_0(\bold{R}_{-\theta(t)}x))\rangle
\\
&=& \langle v_0(\bold{R}_{-\theta(t)} x),  \nabla\omega_0(\bold{R}_{-\theta(t)} x)\rangle\\
&=&(v_0\cdot\nabla \omega_0)(\bold{R}_{-\theta(t)} x).
\end{eqnarray*}
We have used the symbol $\langle, \rangle$ to denote the usual
scalar product in the plane and in the second identity the fact that
rotations preserve the scalar product. A simple calculation yields
\begin{eqnarray*}
\partial_t\omega(t,x)&=&-\dot{\theta}(t)\Big\{\big(-x_2\partial_1+x_1\partial_2\big)\omega_0\Big\}(\bold{R}_{-\theta(t)} x) \\
&=&-\dot\theta(t)( x^\perp\cdot
\nabla\omega_0)(\bold{R}_{-\theta(t)} x).
 \end{eqnarray*}
Consequently, the vorticity equation becomes
\begin{equation}\label{vortds}
\Big(v_0(x)-\dot\theta(t) x^\perp\Big)\cdot\nabla\omega_0(x)=0.
\end{equation}
Recall that for a smoothly bounded domain $D$
$$
\nabla{\chi_{D}}=-\vec{n} \,d\sigma,
$$
where $d\sigma$ is the arc-length measure on $\partial D$ and
$\vec{n}$ the exterior unit normal. Then, for an initial vorticity
as in \eqref{multiconss},  we get
$$
-\nabla
\omega_0=\alpha_1\vec{n}\,d\sigma_1+\sum_{j=1}^{n-1}(\alpha_{j+1}-\alpha_j)
\vec{n}\,d\sigma_{j+1},
$$
with $d\sigma_j$ the the arc-length measure on the curve $\Gamma_j.$
Since  by the assumption $\alpha_1$ and $\alpha_{j+1}-\alpha_j$ do
not vanish, equation \eqref{vortds} is equivalent to
\begin{equation}\label{second2}
\big(v_0(x)-\dot\theta(t) x^\perp\big)\cdot \vec{n}(x)=0,\quad  x\in
\Gamma_j, \quad 1 \le j \le n.
\end{equation}
We conclude from \eqref{second2} that the only way in which
$\dot\theta(t)$  may be non-constant is that $x^\perp \cdot
\vec{n}(x)=0$ on the union of the interfaces $\Gamma =
\cup_{j=1}^n\Gamma_{j}.$ If this is the case then the interfaces
must be concentric circles. Denote by $z^\prime$ a tangent vector at
the point $x=z$ of $\Gamma$. Using the identities
$$v_0\cdot\vec{n}=\nabla\Psi\cdot z^\prime= 2\hbox{Re}\big(\partial_z\Psi \,z^\prime\big)\quad \hbox{and}\quad x^\perp \cdot\vec{n}
=\hbox{Re}\big(\overline{z}\, z^\prime\big)
$$
we finally obtain
$$
2\hbox{Re}\big(\partial_z\Psi
\,z^\prime\big)=\dot\theta(t)\hbox{Re}\big(\overline{z}\,
z^\prime\big),\quad  z\in \Gamma,
$$
which is \eqref{eq1} after setting $\dot\theta(t) \equiv \Omega$.

We close this subsection by noticing that a rotating vortex appears
as a stationary solution for the vorticity equation for the Euler
system in the presence of the linear external velocity
$v_{e}=\dot\theta x^\perp$, namely,
$$
\partial_t \omega+(v-\dot\theta(t) x^\perp)\cdot\nabla\omega=0.
$$
This follows easily from \eqref{vortds}.

\subsection{The role of the Cauchy transform}
In this subsection we describe the motion of a rotating vortex patch
of the form \eqref{multicon11} by means of the Cauchy transforms of
the domains $D_j.$ Without loss of generality and in order to
simplify the presentation we shall restrict our attention to the
case of two interfaces. Hence the initial vorticity has the form
$$
\omega_0=\chi_{D_1}+(\alpha-1)\chi_{D_2}.
$$
One usually defines the stream function as the logarithmic potential
of the vorticity at time $t$, that is,
\begin{equation}\label{logpot}
\Psi_t(z)=\frac{1}{2\pi}\int_{\RR^2}\omega(t,\xi)
\log|z-\xi|\,dA(\xi), \quad z \in \mathbb{C},
\end{equation}
where $dA$ is Lebesgue measure on the plane. Differentiating
\eqref{logpot} with respect to the variable $z$ yields
\begin{equation}\label{stream}
\begin{split}
\partial_z\Psi(z)&=\frac{1}{4\pi}\int_{D_1}\frac{1}{z-\xi}dA(\xi)+(\alpha-1)\frac{1}{4\pi}\int_{D_2}\frac{1}{z-\xi}dA(\xi)\\
&= \frac14 \mathcal{C}(\chi_{D_1})(z)+(\alpha-1) \frac14
\mathcal{C}(\chi_{D_2})(z),
\end{split}
\end{equation}
where
\begin{equation}\label{Cauchy}
 \mathcal{C}(\chi_D)(z) = \frac{1}{\pi} \int_{D}\frac{1}{z-\xi}dA(\xi), \quad z \in \mathbb{C},
\end{equation}
 denotes the Cauchy transform of the domain $D$ (actually, of the characteristic function of $D$). It is well-known
and easy to check that the Cauchy transform is continuous on
$\mathbb{C}$, holomorphic off $\overline{D}$ and has zero limit at
infinity. If $D$ is a bounded domain with boundary of class $C^1,$
there is a formula for the Cauchy transform of $D$, which we proceed
to describe below,  involving only integrals over the boundary
$\Gamma=\partial D.$  The Cauchy integral of the function
$\overline{z}$ on $D$ is
$$
 \gamma^+(z)=\fint_{\Gamma}\frac{\overline{\xi}}{\xi-z}\,d\xi, \quad z \in
 D,
$$
 where we have used the notation
$\fint_{\Gamma}=\frac{1}{2\pi i}\int_{\Gamma}$. Similarly the Cauchy
integral of $\overline{z}$ on $\mathbb{C}\setminus \overline{D}$ is
$$
 \gamma^-(z)=\fint_{\Gamma}\frac{\overline{\xi}}{\xi-z}\,d\xi, \quad z
 \in \mathbb{C}\setminus \overline{D}.
$$
It is plain that the previous  functions are holomorphic in their
domains of definition. They can be extended continuously up to the
boundary of $D$. This follows easily from dominated convergence and
the identity
$$
\gamma^{\pm}(z)=\fint_{\Gamma}\frac{\overline{\xi}-\overline{z}}{\xi-z}\,d\xi+\overline{z}\,\chi_{D}(z),
$$
which holds in the domains of definition of $\gamma^{\pm}$.
 For the sake of simple notations the one-sided limit at the boundary will be denoted by  $\gamma^{\pm}(z)$ as well.
 The Plemelj-Sokhotski\`i  formulae (see  \cite[p. 143]{V}) for the function $\overline{z}$ are the identities
\begin{eqnarray*}
\gamma^+(z)&=& \hbox{p.v.} \fint_{\Gamma}\frac{\overline{\xi}}{\xi-z}\,d\xi+\frac{\overline{z}}{2}, \quad z \in \Gamma, \\
\gamma^-(z)&=& \hbox{p.v.}
\fint_{\Gamma}\frac{\overline{\xi}}{\xi-z}\,d\xi-\frac{\overline{z}}{2},
\quad z \in \Gamma,
\end{eqnarray*}
where the boundary integrals are understood in the principal value
sense. Subtracting one gets the jump formula
\begin{equation}\label{plem}
\overline{z} = \gamma^+(z)-\gamma^-(z),\quad  z\in \Gamma.
\end{equation}
The Cauchy transform of $D$ can be reconstructed from the functions
$\gamma^{\pm}$. According to the Cauchy-Pompeiu formula for the function
$\overline{z}$ ( see\eqref{GC1} below) one has
\begin{equation}\label{Cauchydins}
\mathcal{C}(\chi_{D})(z)=\overline{z}-\gamma^+(z),\quad  z\in
\overline{D}
\end{equation}
and
\begin{equation}\label{Cauchyfora}
 \mathcal{C}(\chi_{D})(z)=-\,\gamma^{-}(z),\quad  z
\notin {D}.
\end{equation}
 We emphasize that these formulae hold also on the boundary
$\Gamma.$

We now come back to the formula \eqref{stream} for the stream
function. Denote by $\gamma_j^{\pm}$ the Cauchy integrals of
$\overline{z}$ for the domain $D_j, \; i=1,2.$  The identities
\eqref{Cauchydins} and \eqref{Cauchyfora} combined with
\eqref{stream} yield
\begin{equation}\label{gam1}
4\partial_z\Psi(z)=
\overline{z}-\gamma_1^+(z)+(1-\alpha)\gamma_2^-(z), \quad z \in
\Gamma_1
\end{equation}
and
\begin{equation}\label{gam2}
4\,\partial_z\Psi =
\overline{z}-\gamma_1^+(z)+(1-\alpha)\gamma_2^-(z), \quad z\in
\Gamma_2.
\end{equation}

 Putting together \eqref{gam1}, \eqref{gam2}  and \eqref{eq1} we obtain the nonlinear
 system of two equations
\begin{equation}\label{bdd1}
\hbox{Re}\Big\{\Big(\lambda \overline{z}+(1-\alpha)
{\gamma_2^-}(z)-\gamma_1^+(z)\Big)z^\prime \Big\}=0,\quad  z\in
\Gamma_1\cup \Gamma_2,
\end{equation}
with $\lambda=1-2\Omega.$

We mention for future reference that on $\Gamma_2$ the preceding
equation can also be written in the form
\begin{equation}\label{bd1} \hbox{Re}\Big\{\Big(
(\alpha-2\Omega)\overline{z}+(1-\alpha)\gamma_2^+(z)-\gamma_1^+(z)\Big)z^\prime
\Big\}=0,\quad  z\in \Gamma_2.
\end{equation}

\section{Tools from potential theory}
\subsection{Preliminaries on  complex analysis}

We begin by recalling a classical results about complex functions.
The derivatives of a smooth function $\varphi: \CC \to \CC$ with
respect to $z$ and $\overline{z}$ are defined as
$$
\frac{\partial \varphi}{\partial z}=\frac{1}{2}\Big(\frac{\partial
\varphi}{\partial x} -i\frac{\partial \varphi}{\partial y}\Big)
\quad \quad \quad \mbox{and} \quad \quad \quad  \frac{\partial
\varphi}{\partial \overline{z}} =\frac{1}{2}\Big(\frac{\partial
\varphi}{\partial x}+i\frac{\partial \varphi}{\partial y}\Big).
$$

     Let $D$ be a finitely connected domain bounded by finitely many smooth Jordan curves and let $\Gamma$ be $\partial D$ endowed with
     the positive orientation.  Then the Cauchy-Pompeiu formula
     reads as
     \begin{equation*}\label{GC}
    \varphi(z)\chi_{\overline{D}}(z) = \frac{1}{2 \pi i}\int_{\Gamma}\frac{\varphi(\xi)}
    {\xi -z}d\xi-\frac{1}{\pi}\int_{D}\frac{\partial \varphi} {\partial \overline{\xi}}(\xi) \frac{1}{\xi-z}\, dA(\xi)
     ,\quad \, z\in \CC.
     \end{equation*}
For $z\in \partial D$ the boundary integral has to be understood as
the limit from $D$ of the same integral.  Taking
$\varphi(z)=\overline{z}$ we obtain
    \begin{equation}\label{GC1}
 \overline{z}\chi_{\overline{D}}(z) =   \frac{1}{2\pi i}\int_{\Gamma}\frac{\overline{\xi}}{\xi
 -z}d\xi+\mathcal{C}(\chi_D)(z), \quad z \in \CC.
    \end{equation}

     \subsection{Cauchy transform} We intend to compute the Cauchy transform \eqref{Cauchy} of discs and ellipses.
This can be done rather easily using \eqref{Cauchydins} and
\eqref{Cauchyfora}, namely,
 \begin{equation*}
\mathcal{C}(\chi_D)(z)=
 \left\{
\begin{array}{ll}
\overline{z}-\gamma^+(z), \quad \, z\in D \\
-\gamma^-(z), \quad z\notin D.
\end{array} \right.
     \end{equation*}

 $\bullet$ {\it The discs.} To begin with we consider the unit disc.
 Since $\xi \overline{\xi} =1 $ on $\partial D,$
$$
\gamma^+(z)=\fint_{|\xi|=1}\frac{1}{\xi(\xi-z)}d\xi=0, \quad  \,z\in
D\quad\hbox{and} \quad  \gamma^-(z)=-\frac1z, \quad  z\notin
\overline{D}.
$$
Therefore
\begin{equation*}
\mathcal{C}(\chi_D)(z)=
 \left\{
\begin{array}{ll}
\overline{z}, \quad z\in D \\
\frac1z, \quad z\notin D.
\end{array} \right.
\end{equation*}
 For a disc of center  $z_0$ and radius $r$  translating and
 dilating the previous result gives
     \begin{equation*}
\mathcal{C}(\chi_D)(z)=
 \left\{
\begin{array}{ll}
\overline{z}-\overline{z}_0, \quad  z\in D \\
\frac{r^2}{z-z_0}, \quad z \notin D.
\end{array} \right.
     \end{equation*}

$\bullet$ {\it The  ellipses.}  Let $D$ be the domain enclosed by
the ellipse $\{\frac{x^2}{a^2}+\frac{y^2}{b^2} = 1\}.$  We set
$c^2=a^2-b^2.$ If the major semi-axis is $a$ then the foci of the
ellipse are $\pm \sqrt {a^2 -b^2}.$  Otherwise the foci are $ \pm i
\sqrt {b^2 -a^2}.$  Rewriting the cartesian equation of the ellipse
in terms of the variables $z$ and $\overline{z}$ and solving for
$\overline{z}$ leads to
     $$
\overline{z}=Q z+F(z), \quad z\in \partial D,
     $$
     where
     $$
     Q=\frac{a-b}{a+b}\quad\quad \quad {\hbox{and}\quad \quad \quad F(z)=\frac{2ab}{z\Big(1+\sqrt{1-\frac{c^2}{z^2}}\Big)}}\cdot
     $$
     By Cauchy's Integral Formula
     $$
     \fint_{\partial D}\frac{Q\xi}{\xi-z}d\xi=Qz,\,  z\in D.
     $$
     Since $\xi\mapsto \frac{F(\xi)}{\xi-z}$ is holomorphic off $\overline{D}$ and has a double zero at
     infinity,
     $$
      \fint_{\partial D}\frac{F(\xi)}{\xi-z}d\xi=0,\,  z\in D.
     $$
   Hence
 \begin{equation*}\label{gammaplus}
     \gamma^+(z)=Qz,\quad  z\in D.
 \end{equation*}
    To compute the function $\gamma^-$ we use Cauchy's Integral Formula in $\CC \setminus
    \overline{D}$ to get
    \begin{eqnarray*}
    \gamma^-(z)&=&Q\fint_{\partial D}\frac{\xi}{\xi-z}d\xi+\fint_{\partial D}\frac{F(\xi)}{\xi-z}d\xi\\
    &=& \fint_{\partial D}\frac{F(\xi)}{\xi-z}d\xi = -F(z).
    \end{eqnarray*}

Therefore
     \begin{equation}\label{Eqq4}
     \gamma^+(z)=Qz,\quad z\in D, \quad \quad \quad \gamma^-(z)= -F(z),\quad z\notin
     {D},
     \end{equation}
and
\begin{equation}\label{cauc12}
\mathcal{C}(\chi_D)(z)=
 \left\{
\begin{array}{ll}
\overline{z}-Q z,  \quad z\in D \\
\frac{2ab}{z\Big(1+\sqrt{1-\frac{c^2}{z^2}}\Big)},   \quad z \notin
D.
\end{array} \right.
\end{equation}

     Remark that ${\gamma^-}$ satisfies the equation
     \begin{equation}\label{eq4}
     c^2 \{\gamma^-(z)\}^2+4ab\, z\,{\gamma^-}(z)+4 a^2b^2=0.
     \end{equation}
    For the general case where the ellipse is centered   at $z_0$ and its major axis makes an angle $\theta$ with the horizontal
    axis one has
     \begin{equation*}\label{expr12}
     \gamma^+(z)=e^{-2i\theta}Q (z-z_0)+\overline{z_0}, \quad z\in {D}.
     \end{equation*}

\subsection{Inverse problems}\label{inver}
We shall see along this paper that the equations governing the
interfaces of the rotating patch can be solved in some cases and
allow to get explicitly the Cauchy transforms of the involved
domains. It is a general fact that the knowledge of the Cauchy
transform  outside a domain $D$ is equivalent to the  knowledge of
the geometrical moments $(m_n)_{n\in\N}$ defined by
$m_n=\frac{1}{\pi}\int_{D}z^n dA(z)$ since it is a generating
function of these moments. Now the problem  is to see whether  the
shape of the domain is encoded by its Cauchy transform. This is an
inverse problem  of  potential theory which appears in several
contexts like celestial mechanics or geophysics: earth's shape,
gravitational lensing \cite{Fas}, Hele-Shaw flows \cite{Var},... The
inverse  problem is not uniquely solvable in general as some
counter-examples show (see for instance \cite{Var}). However
uniqueness can be established for example under the assumption that
the domains are starlike with respect to a common point
(\cite{Nov}).


In our context, the Cauchy transform has a special algebraic form
and, as we shall see, this determines uniquely the shape giving rise
to the Cauchy transform at hand. There are two kinds of problems
that we are led to deal with. In the first one we discuss the case
where the Cauchy transform is known inside the domain and given by a
first order polynomial function. In the second one the Cauchy
transform is known outside the domain and this case seems to be
trickier. For the first case we prove the following result.

\begin{prop}\label{propinv}
Let $\Gamma$ be a Jordan curve of class $C^1$ enclosing a bounded
domain $D$. Assume that there exist $Q\in \RR$ and $z_0\in \CC$ such
that
$$
\gamma^+(z)\triangleq \frac{1}{2\pi
i}\int_{\Gamma}\frac{\overline{\xi}}{\xi-z}d\xi=Q(z-z_0)+\overline{z_0},\quad
 z\in D.
$$
Then the curve $\Gamma$ is an ellipse of center $z_0$ with semi-axes
$a$ and $b$ satisfying
 \begin{equation*}
Q= \frac{a-b}{a+b}\cdot
     \end{equation*}
\end{prop}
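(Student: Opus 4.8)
The plan is to reduce to $z_0=0$ by a translation — one checks, via the change of variable $\eta=\xi-z_0$ and the Cauchy integral formula, that the hypothesis becomes $\gamma^+(z)=Qz$ for $z\in D-z_0$ — and then to extract from this an explicit \emph{quadratic functional equation} for the exterior Cauchy integral $\gamma^-(z)=\fint_\Gamma\frac{\overline\xi}{\xi-z}\,d\xi$, which restricted to $\Gamma$ and rewritten through the jump relation $\overline z=\gamma^+(z)-\gamma^-(z)$ will display the Cartesian equation of an ellipse. Concretely, with $z_0=0$: by \eqref{plem} and the continuous extension to $\Gamma$ of $\gamma^+(z)=Qz$, one has $\overline z=Qz-\gamma^-(z)$ on $\Gamma$. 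Since $Q\in\RR$, conjugating this and substituting $\overline z=Qz-\gamma^-(z)$ into the result gives
$$
\overline{\gamma^-(z)}=(Q^2-1)\,z-Q\,\gamma^-(z)=:H(z),\qquad z\in\Gamma .
$$
Both $\gamma^-$ and $H$ are holomorphic in the connected open set $\mathbb{C}\setminus\overline D$ and continuous up to $\Gamma$; moreover $\gamma^-$ vanishes at infinity, with $z\,\gamma^-(z)\to-\frac{|D|}{\pi}$ by the area formula $\int_\Gamma\overline\xi\,d\xi=2i|D|$, so $\gamma^-\not\equiv0$.

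The key step — and, I expect, the only genuinely delicate point — is to single out the right function for the maximum principle: it is $\Phi:=\gamma^-\,H$. Indeed $\Phi$ is holomorphic in $\mathbb{C}\setminus\overline D$ and continuous up to $\Gamma$; on $\Gamma$ it equals $\gamma^-\,\overline{\gamma^-}=|\gamma^-|^2\ge 0$, hence is real there; and $\Phi(z)\to(1-Q^2)\frac{|D|}{\pi}=:K$ as $z\to\infty$. Applying the maximum principle to the bounded harmonic function $\operatorname{Im}\Phi$, which vanishes on $\Gamma$ and at infinity, forces $\Phi\equiv K$. In particular $|\gamma^-|^2=K$ on $\Gamma$; were $K=0$, the maximum modulus principle would give $\gamma^-\equiv0$, contradicting $z\,\gamma^-(z)\to-|D|/\pi\ne 0$. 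Hence $K>0$, and since $K=(1-Q^2)|D|/\pi$ with $|D|>0$ we obtain $|Q|<1$.

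It remains to unwind $\Phi\equiv K$. This identity reads $Q\,\gamma^-(z)^2+(1-Q^2)\,z\,\gamma^-(z)+K=0$ for every $z\in\mathbb{C}\setminus\overline D$ (incidentally this is precisely equation \eqref{eq4} for a suitable ellipse), in particular on $\Gamma$. Substituting there $\gamma^-(z)=Qz-\overline z$ and simplifying yields $Q(z^2+\overline z^2)-(Q^2+1)\,z\overline z+K=0$ on $\Gamma$, i.e., in real coordinates,
$$
(1-Q)^2x^2+(1+Q)^2y^2=K .
$$
Because $|Q|<1$ and $K>0$, this is a non-degenerate ellipse $\mathcal E$ centred at the origin with semi-axes $a=\sqrt K/(1-Q)$ and $b=\sqrt K/(1+Q)$, and a one-line computation gives $\frac{a-b}{a+b}=Q$ (the case $Q=0$ being the circle $a=b$). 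Thus $\Gamma\subseteq\mathcal E$; since $\Gamma$ and $\mathcal E$ are both Jordan curves and a Jordan curve has no proper subset that is again a Jordan curve, $\Gamma=\mathcal E$. Undoing the translation, $\Gamma$ is an ellipse of centre $z_0$ with semi-axes $a,b$ satisfying $Q=\frac{a-b}{a+b}$, as claimed. $\square$
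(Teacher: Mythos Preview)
Your proof is correct and follows essentially the same approach as the paper's: both construct a function holomorphic on $\mathbb{C}\setminus\overline D$ that is real on $\Gamma$ and has a finite limit at infinity, apply the maximum principle to its imaginary part, and read off the Cartesian equation of an ellipse --- and in fact your $\Phi=\gamma^-H=(Q^2-1)z\gamma^- - Q(\gamma^-)^2$ is exactly the paper's function $g$. The only cosmetic difference is how the function is discovered: the paper finds $g$ by taking a linear combination with undetermined coefficients designed to kill a $(z-z_0)^2$ term, whereas you observe more directly that $\overline{\gamma^-}|_\Gamma$ extends holomorphically outside $D$ to $H$, so $\gamma^-H=|\gamma^-|^2$ is automatically real on $\Gamma$; your route also makes the constraint $|Q|<1$ (which the paper only records in a remark) an immediate consequence of $K=(1-Q^2)|D|/\pi>0$.
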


\begin{rema}
It is a surprising consequence of the Proposition that $|Q|$ must be
strictly less than one. In other words, it is not possible to find a
Jordan curve satisfying the hypotheses of Proposition \ref{propinv}
with $|Q|\geq 1.$
\end{rema}
\begin{proof}
According to the jump formula \eqref{plem} we have on  $\Gamma$ the
decomposition
\begin{eqnarray*}
\overline{z}&=& \gamma^+(z)- \gamma^-(z)\\
&=& Q(z-z_0)+\overline{z_0}- \gamma^-(z),
\end{eqnarray*}
with $\gamma^-$ holomorphic on $\CC_\infty\backslash \overline{D}$
and decaying at infinity like $\frac{1}{z}.$ It follows that for any
$z\in \Gamma$
$$
(\overline{z}-\overline{z_0})^2+(z-{z_0})^2=(1+Q^2)(z-z_0)^2-2Q
(z-z_0)\gamma^-(z)+\{\gamma^-(z)\}^2
$$
and
$$
|z-z_0|^2=Q(z-z_0)^2-(z-z_0)\gamma^-(z).
$$
Let $A$ and $B$ be  two real numbers that will be chosen later on.
For  $z\in \Gamma$ one has
$$
-A\Big((\overline{z}-\overline{z_0})^2+(z-{z_0})^2\Big)+B|z-z_0|^2=\big(
B Q-A(1+Q^2)\big)(z-z_0)^2+ g(z),
$$
with
$$
g(z)\triangleq \big(2AQ-B\big)
(z-z_0)\gamma^-(z)-A\{\gamma^-(z)\}^2.
$$
Now choose $A$ and $B$ such that $ BQ-A(1+Q^2)=0$ in order to kill
the quadratic term. For example we can take
$$
A=Q\quad\hbox{and}\quad B=1+Q^2.
$$
Hence
$$
-Q\Big((\overline{z}-\overline{z_0})^2+(z-{z_0})^2\Big)+(1+Q^2)|z-z_0|^2=g(z),\quad
 \, z\in \Gamma.
$$
The function $g$ is clearly holomorphic on $\CC\backslash
\overline{D}$ and has a limit at infinity given by
\begin{eqnarray*}
\lim_{z\to\infty}g(z)&=&(2AQ-B)\lim_{z\to \infty}\frac{1}{2 \pi i}\int_{\Gamma}\frac{\overline{\xi} z}{\xi-z}d\xi\\
&=& (1-Q^2) \frac{1}{2 \pi i}\int_{\Gamma}{\overline{\xi} }\,d\xi \\
&=& \frac{1-Q^2}{\pi} |D|,
\end{eqnarray*}
where we applied Green-Stokes in the last identity. Notice that $g$
has a continuous extension up to the boundary $\Gamma$ and takes
real values on this set. Then the imaginary part   of $g$ is a
harmonic function on the exterior domain $\CC\backslash
\overline{D}$, continuous up to the boundary and satisfying
$$
 \hbox{Im } g(z)=0, \quad z\in \Gamma \quad\hbox{and}\quad \lim_{z\to
\infty}\hbox{Im } g(z)=0 .
$$
By the maximum principle we conclude that $\hbox{Im } g$ is
identically zero on $\mathbb{C} \setminus \overline{D}$. Thus the
holomorphic function $g$ is real on $ \mathbb{C} \setminus
\overline{D}$ and consequently must be constant. This means that
$$
-Q\Big((\overline{z}-\overline{z_0})^2+(z-{z_0})^2\Big)+(1+Q^2)|z-z_0|^2=C,\quad
z\in \Gamma
$$
with $C$ a constant.  Set $X=\hbox{Re}(z-z_0)$ and $ Y=\hbox{Im
}(z-z_0)$ then
$$
(1-Q)^2 X^2+(1+Q)^2 Y^2=C, \quad \hbox{on}\quad \Gamma.
$$
This is an equation  for the curve $\Gamma$ in the cartesian
coordinates $X$ and $Y$. For $Q\not\in\{-1,1\}$ the curve $\Gamma$
is an ellipse. For $Q \in \{-1,1\}$  the curve reduces to a segment,
which is not possible by the assumptions. The proof of the desired
result is complete.
\end{proof}
Next we shall consider the case where the Cauchy transform is
prescribed outside the domain. We will   prove the following result.
\begin{prop}\label{invext}
Let $\Gamma$  be a Jordan curve of class $C^1$ enclosing a domain
$D$ and let $z_1$ be a point in $D$ such that
$$
\frac{1}{2 \pi
i}\int_{\Gamma}\frac{\overline{\xi}}{\xi-z}d\xi=\frac{a}{z-z_1}+\frac{b}{(z-z_1)^2},\quad
z\notin \overline{D},
$$
with $a$ and $b$ real constants. Then there exists a constant $c$
such that curve $\Gamma$ is contained in the set
$$
|z-z_1|^4+a|z-z_1|^2+2b\,\hbox{Re} \,z=c.
$$
\end{prop}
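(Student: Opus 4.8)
The plan is to imitate the proof of Proposition \ref{propinv}: I will manufacture a function that is holomorphic on $D$, continuous up to $\Gamma$, and real‑valued on $\Gamma$, so that the maximum principle forces it to be constant; reading off its boundary values then produces the algebraic equation for $\Gamma$. The new difficulty is that the prescribed datum $\gamma^-(z)=\frac{a}{z-z_1}+\frac{b}{(z-z_1)^2}$ now has a pole \emph{inside} $D$, at $z_1$, so the obvious first attempt at such a function will be holomorphic on $D$ only away from $z_1$; the heart of the argument is to choose the combination so that this pole cancels.

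First I would set $w=z-z_1$ and $H(z)=\gamma^+(z)-\overline{z_1}$, where $\gamma^+(z)=\fint_{\Gamma}\frac{\overline\xi}{\xi-z}\,d\xi$ for $z\in D$; as recalled just before Proposition \ref{propinv}, $H$ is holomorphic on $D$ and extends continuously to $\overline D$. Since $z_1\in D$, the datum $\gamma^-$ is also continuous up to $\Gamma$, so the jump formula \eqref{plem} and the hypothesis give, on $\Gamma$,
\[
\overline w = H(z)-\frac{a}{w}-\frac{b}{w^{2}} .
\]
Multiplying by $w$ gives $wH(z)=|w|^{2}+a+\frac{b}{w}$ on $\Gamma$. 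I would then introduce
\[
g(z):=(z-z_1)H(z)-\frac{b}{z-z_1},
\]
a function holomorphic on $D\setminus\{z_1\}$ with at worst a simple pole at $z_1$, for which the previous identity reads $g(z)=|w|^{2}+a$ on $\Gamma$; in particular $g$ is real on $\Gamma$, and $\overline w=\frac{g(z)-a}{w}$ there.

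The key step is to write down the right combination. Using the two boundary relations one has, on $\Gamma$, $|w|^{4}+a|w|^{2}=g^{2}-ag$ and $2\,\mathrm{Re}\,z=2\,\mathrm{Re}\,z_1+w+\overline w=2\,\mathrm{Re}\,z_1+w+\frac{g-a}{w}$, so that
\[
P(z):=g(z)^{2}-a\,g(z)+b\Big(w+\frac{g(z)-a}{w}\Big)
\]
equals $|w|^{4}+a|w|^{2}+2b\,\mathrm{Re}\,z-2b\,\mathrm{Re}\,z_1$ on $\Gamma$, hence is real‑valued there. Substituting $g=wH-\frac{b}{w}$ into the definition of $P$ and expanding, I expect every term in $\frac{1}{w}$ and $\frac{1}{w^{2}}$ to cancel, leaving
\[
P(z)=(z-z_1)^{2}H(z)^{2}-a(z-z_1)H(z)-b\,H(z)+b(z-z_1),
\]
which is holomorphic on all of $D$ and continuous up to $\Gamma$. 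This cancellation is the crux: the combination $g^{2}-ag+2b\,\mathrm{Re}\,z$ is chosen exactly so that the principal part $\frac{b^{2}}{w^{2}}+\frac{ab}{w}$ of $g^{2}-ag$ at $z_1$ is annihilated by the contribution of $2b\,\mathrm{Re}\,z$ once $\overline w$ is expressed through $g$.

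Finally, $\mathrm{Im}\,P$ is harmonic on $D$, continuous on $\overline D$, and vanishes on $\Gamma$, so the maximum principle yields $\mathrm{Im}\,P\equiv 0$; the holomorphic function $P$ is then real on the connected set $D$, hence constant, say $P\equiv c'$. Evaluating on $\Gamma$ and using $g=|z-z_1|^{2}+a$ there gives
\[
|z-z_1|^{4}+a\,|z-z_1|^{2}+2b\,\mathrm{Re}\,z=c'+2b\,\mathrm{Re}\,z_1=:c,\qquad z\in\Gamma,
\]
which is the assertion. I expect the only genuine obstacle to be locating the combination $P$; once the pole cancellation is secured, the remainder is the same maximum‑principle argument used in Proposition \ref{propinv}.
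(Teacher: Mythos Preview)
Your proof is correct and is essentially the same as the paper's: both start from the jump relation $\overline{w}=\phi(w)-\frac{a}{w}-\frac{b}{w^{2}}$ on the translated curve, form the combination $|w|^{4}+a|w|^{2}+b(w+\overline w)$ so that the principal part at $w=0$ cancels, and then invoke the maximum principle to conclude that the resulting holomorphic function on $D$ is constant. Your intermediate function $g=wH-\frac{b}{w}$ is a convenient repackaging, but the holomorphic function $P=(z-z_1)^{2}H^{2}-a(z-z_1)H-bH+b(z-z_1)$ you arrive at coincides with the paper's $\phi_2$.
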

\begin{proof} From the jump formula \eqref{plem} we have
$$
\overline{z}=\gamma^+(z)-\gamma^-(z), \quad z \in \Gamma,
$$
with
$$
 \gamma^+(z)= \frac{1}{2 \pi i}
\int_{\Gamma}\frac{\overline{\xi}}{\xi-z}\,d\xi, \quad z\in D, \quad
\hbox{and} \quad  \gamma^-(z)=\frac{1}{2 \pi i}
\int_{\Gamma}\frac{\overline{\xi}}{\xi-z}\,d\xi, \quad z\notin
\overline{D}.
$$
By assumption
\begin{equation*}\label{gamma12}
\overline{z}=\gamma^+(z)-\frac{a}{z-z_1}-\frac{b}{(z-z_1)^2}, \quad
z\in \Gamma.
\end{equation*}
Set $w=z-z_1,\,\,\widetilde{\Gamma}\triangleq\Gamma -z_1 \,$ and \,
$\widetilde{D}=D-z_1$.  Then the preceding identity can be written
as
\begin{equation}\label{sifr}
\overline{w}=\phi(w)-\frac{a}{w}-\frac{b}{w^2};\quad  w\in
\widetilde{\Gamma},
\end{equation}
with $\phi(w)\triangleq \gamma^+(z_1+w)- z_1 $, which is holomorphic
in $\widetilde{D}$. By \eqref{sifr}
\begin{equation*}\label{wahed}
w+\overline{w}=w+\phi(w)-\frac{a}{w}-\frac{b}{w^2}, \quad w \in
\widetilde{\Gamma},
\end{equation*}
\begin{equation*}\label{ethnan}
w\overline{w}=w\phi(w)-a-\frac{b}{w}, \quad w \in
\widetilde{\Gamma},
\end{equation*}
and
\begin{equation*}\label{ethnan2}
(w\overline{w})^2=\frac{b^2}{w^2}+\frac{2ab}{w}+\phi_1(w), \quad w
\in \widetilde{\Gamma},
\end{equation*}
where $\phi_1$ is holomorphic in $\widetilde{D}$. Taking the
appropriate linear combination of the previous three identities we
kill the singularity at the origin, that is,
\begin{equation*}\label{ethnan3}
(w\overline{w})^2+aw\overline{w}+b(w+\overline{w})=b(w+\phi(w)+aw\phi(w)-a^2+\phi_1(w)\triangleq
\phi_2(w), \quad  w\in \widetilde{\Gamma}.
\end{equation*}
It is plain that $\phi_2$ is holomorphic in $\widetilde{D}$ and the
function in the left-hand side is real-valued. Thus $\phi_2$ is
constant and so
\begin{equation}\label{cart}
|w|^4+a|w|^2+b(w+\overline{w})= c, \quad  w\in\,\widetilde{\Gamma}.
\end{equation}
This completes the proof.
\end{proof}

\section{Proofs of the main results}
In this section we  prove Theorem \ref{impresuld} and Theorem
\ref{impresul}. Recall that we are dealing with  a rotating
vorticity of the form \eqref{multiconss} with only two interfaces,
that is,
\begin{equation*}\label{formza}
\omega_0=\chi_{D_1}+(\alpha-1)\chi_{D_2}, \quad \alpha\in \R,
\end{equation*}
where $D_1$ and $D_2$ are simply connected domains satisfying
$\overline{D_2}\subset D_1.$  As we have already seen the
description  of the rotating vorticity in this special case is
governed by  the \mbox{equations  \eqref{bdd1}.} Owing to the
complicated structure of this  system, which  is strongly nonlinear
and nonlocal,  a  description of the full set of solutions seems to
be out of reach. However, as we stated in Theorem 1 we can show that
if one of the interfaces is a circle then the patch is necessarily
trivial, in the sense that it is an annulus.  In Theorem 2 we
completely solve the system assuming that the inner interface is an
ellipse. Then the exterior interface is a confocal ellipse and
certain relations (introduced in \cite{Flierl}) between the angular
velocity of rotation, the inner vorticity $\alpha$ and the
parameters of the ellipses must be satisfied. Likewise the result
should also hold under the assumption that the exterior interface is
an ellipse, but we have not been able to solve the corresponding
inverse problem.

\subsection{Circular interfaces : the proof of Theorem 1}


\begin{proof}[Proof of Theorem \ref{impresuld}]
The proof relies on equation \eqref{bd1} combined with the inverse
problem results established in the previous section. We first study
the case in which the inner interface $\Gamma_2$ is a circle, which
is easier than the other one.
 \vspace{0,2cm}

 {\bf Case 1 :
$\Gamma_2$ a circle.} \vspace{0,2cm}

Let $\Gamma_2$ be a circle centered at $z_2.$  Assume,  without loss
of generality, that $z_2$ lies in the real axis. Then
$\gamma_2^+(z)={z}_2$ and thus equation \eqref{bd1} reduces to
$$
\hbox{Re}\Big\{\Big((\alpha-2\Omega)\overline{z}+(1-\alpha){z}_2-\gamma_1^+(z)\Big)
z^\prime\Big\}=0, \quad z \in \Gamma_2.
$$
Since $z^\prime=i(z-z_2)$ is a tangent vector at the point $z\in
\Gamma_2$,
$$
\hbox{Re}\Big\{\Big((\alpha-2\Omega)\overline{z}+(1-\alpha){z}_2-\gamma_1^+(z)\Big)i(z-z_2)\Big\}=0,
\quad z \in \Gamma_2.
$$
Observe that $\gamma_1^+(z)=(1-2\Omega) {z}_2$ is a solution of the
above equation. We will show that this is the only solution. Set
$\varphi(z)=\gamma_1^+(z)-(1-2\Omega) z_2,$ so that
$$
\hbox{Im}\big\{\varphi(z)(z-z_2)\big\}=0,\quad  z\in \Gamma_2.
$$
It is plain that $z\mapsto \varphi(z)(z-z_2)$ is holomorphic in
$D_1$, which contains $\overline{D}_2$, and its imaginary part is a
harmonic function vanishing on the boundary $\Gamma_2$. By the
maximum principle  $\hbox{Im}\big\{\varphi(z)(z-z_2)\big\}=0$ in
$D_2$ and so  $\varphi(z)(z-z_2)$ is constant in $D_2$. Evaluating
at $z_2$ we see that this constant must be zero and then that
$\varphi$ vanishes identically on $D_2$ and hence on $D_1$ by
holomorphic continuation. Therefore
\begin{equation}\label{gamma2}
\gamma_1^+(z)=(1-2\Omega) {z}_2,\quad  z\in D_1.
\end{equation}
Now in view of Proposition \ref{propinv} the function $\gamma_1^+$
determines the shape of the boundary $\Gamma_1,$  which turns out to
be a circle centered at the point $z_1\triangleq(1-2\Omega){z}_2.$
The next step is to show that the two circles have the same center.
With this in mind we substitute in equation \eqref{bdd1} the
expression \eqref{gamma2}  for $\gamma_1^+$
 and the identity
$$
\gamma_2^-(z)=\frac{r^2}{z_2-z},  \quad z \notin D_2,
$$
$r$ being the radius of the circle $\Gamma_2.$  We conclude that
$$
\hbox{Im}\Big\{\Big(\lambda(\overline{z}-{z}_2)+(1-\alpha)\frac{r^2}{z_2-z}\Big)(z-z_1)\Big\}=0,\quad
z\in \Gamma_1.
$$
Since $z_1$ and $z_2$ are real,
$$
\lambda({z}_1-{z}_2)\,\hbox{Im}(z-z_1)=(1-\alpha)r^2\,\hbox{Im}\Big\{\frac{z-z_1}{z-z_2}\Big\},\quad
z\in \Gamma_1.
$$
Set $w=z-z_1$ and  $z_0=z_1-z_2.$  Write the preceding equation in
terms of $w$ and $z_0,$  replace $w$ by $-w$ and add the two
equations. We obtain
$$
0=(1-\alpha) r^2\hbox{Im}\big\{\frac{w^2}{w^2-z_0^2}\big\},\quad
  w \in -z_1+\Gamma_1.
$$
As $\alpha\neq 1$ we obtain  $z_0=0$, namely $z_1=z_2.$  Then the
interfaces are concentric circles and there is no restriction on the
parameters $\alpha$ and $\Omega$. This is coherent with the fact
that in this case the vorticity is radial and therefore the flow is
stationary.

\vspace{0,2cm}

{\bf{ Case 2 : $\Gamma_1$  a circle.}} \vspace{0,2cm}

Let $z_1$ be the center of $\Gamma_1.$ Without loss of generality we
may assume that $z_1\in\RR$ and that the center  of mass is the
origin. This implies that the center of mass $z_2$ of $D_2$ is real.
Our goal is to prove  that $\Gamma_2$ must be a circle centered at
$z_1.$  Equation \eqref{bdd1} on $\Gamma_1$ takes the form
\begin{equation*}
\hbox{Im}\Big\{\Big(\lambda \overline{z}-{z}_1+(1-\alpha)
{\gamma_2^-}(z)\Big)(z-z_1) \Big\}=0,\quad  z\in \Gamma_1,
\end{equation*}
which is clearly equivalent to
\begin{equation*}
\hbox{Im}\Big\{\Big((\lambda-1){z}_1+(1-\alpha)
{\gamma_2^-}(z)\Big)(z-z_1) \Big\}=0,\quad  z\in \Gamma_1.
\end{equation*}
Set $w=z-z_1$, $\tilde{\Gamma}_j = -z_1+\Gamma_j$ and $\tilde{D_j}=
-z_1 + D_j, i=1,2.$  Thus the preceding equation becomes
$$
\hbox{Im}\Big\{\Big((\lambda-1){z}_1+(1-\alpha)
{\gamma_2^-}(z_1+w)\Big) w \Big\}=0,\quad  w\in \tilde{\Gamma}_1,
$$
Since  $|\frac{\xi}{w}|<1$ for $\xi\in \tilde{\Gamma}_2$ and $w\in
\tilde{\Gamma}_1,$  one has
\begin{eqnarray*}
 {\gamma_2^-}(z_1+w)&=&\frac{1}{2\pi i}\int_{\tilde{\Gamma}_2}\frac{\overline{\xi}}{\xi-w}d\xi\\
&=& \sum_{n \ge 0} \frac{a_n}{w^{n+1}},
\end{eqnarray*}
where
$$
a_n = - \frac{1}{2\pi i}\int_{\tilde{\Gamma}_2} \xi^n
\overline{\xi}\,d\xi.
$$
 Therefore
$$
\Big((\lambda-1){z}_1+(1-\alpha) {\gamma_2^-}(z_1+w)\Big)
w=(\lambda-1){z}_1 w+(1-\alpha) \sum_{n\ge 0} \frac{a_n}{w^{n}}
$$
and
\begin{equation}\label{four1}
\hbox{Im}\Big\{(\lambda-1){z}_1 w+(1-\alpha) \sum_{n \ge 0}
\frac{a_n}{w^{n}}\Big\}=0,\quad  w\in \tilde{\Gamma}_1.
\end{equation}
By Green-Stokes
$$
a_0=\frac{1}{2\pi
i}\int_{\widetilde{\Gamma}_2}\overline{\xi}d\xi=\frac{1}{\pi}|D_2|.
$$
Now since $\alpha\neq 1$ and $a_0\in \RR$,  equation \eqref{four1}
holds true if and only if
$$
(\lambda-1) z_1 r_1^2=(1-\alpha) a_1 \quad\hbox{and}\quad  a_n =0,
\quad n \ge 2,
$$
$r_1$ being the radius of $\Gamma_1.$ Thus we obtain the following
expression for $\gamma_2$
\begin{equation}\label{gamma102}
\gamma_2^-(z)=\frac{a_0}{z-z_1}+\frac{a_1}{(z-z_1)^2}, \quad  z\in
\Gamma_1.
\end{equation}
 Since $\gamma_2^-$ is continuous in $\CC\backslash D_2,$  the
pole $z_1$ of  $\gamma_2^-$ must be in $D_2.$ Now we will evaluate
the coefficient $a_1$. Using Green-Stokes
\begin{eqnarray*}
a_1&=&\frac{1}{2\pi i}\int_{-z_1+\Gamma_2}|\xi|^2d\,\xi\\
&=&\frac{1}{\pi}\int_{-z_1+D_2}\xi\,dA(\xi)\\
&=&\frac{1}{\pi}\int_{D_2}{(\xi-z_1)}\,dA(\xi).
\end{eqnarray*}
Let $z_2$ be the center of mass of $D_2.$ Thus
 $$
 \int_{D_2}{(\xi-z_2)}dA(\xi)=0,
 $$
 and so
 $$
 a_1=\frac1\pi(z_2-z_1)|D_2|.
 $$
 If we knew that $z_1=z_2$, then
 $a_1=0$ and therefore $\Gamma_2$ would be a circle of center $z_1$ by Proposition \ref{invext}.

  It remains to show that  $a_1$ vanishes.
Combining equations \eqref{bdd1}  and  \eqref{gamma102} we get
\begin{equation}\label{bdr1}
\hbox{Re}\Big\{\Big((\lambda-1){z_1}+
\lambda\overline{w}+\frac{a_0(1-\alpha)}{w}+\frac{a_1(1-\alpha)}{w^2}\Big)w^\prime
\Big\}=0,\quad w\in \tilde{\Gamma}_2,
\end{equation}
Our next task is to find a useful expression for a tangent vector
$w^\prime$ to  $\tilde{\Gamma}_2$ at the point $w$. Recall that by
Proposition \ref{invext} the  curve $\tilde{\Gamma}_2$ is defined in
Cartesian coordinates by
$$
P(x,y)\triangleq (x^2+y^2)^2+a_0(x^2+y^2)+2a_1 x=c.
$$
A tangent vector is then given by
\begin{eqnarray*}
w^\prime&=&-\partial_yP+i\,\partial_x P\\
&=&4|w|^2 iw+2ia_0w+2ia_1.
\end{eqnarray*}
Substituting this expression for $w'$ in equation \eqref{bdr1} one
gets
$$
\hbox{Im }\Big\{(\lambda-1) z_1 w(2|w|^2+a_0)+\lambda
a_1\overline{w}+\frac{2A}{w}(|w|^2+a_0)+\frac{a_1\,A}{w^2}\Big\}=0,
\quad w\in \tilde{\Gamma}_2,
$$
where we have set $A= a_1(1-\alpha)$. This gives
$$
(1-\lambda) z_1 (2|w|^2+a_0)+\lambda
a_1+\frac{2A}{|w|^2}(|w|^2+a_0)+a_1\,A\frac{w+\overline{w}}{|w|^4}=0,
\quad w\in \tilde{\Gamma}_2,
$$
which is equivalent to
$$
2(1-\lambda) z_1|w|^6+\big(2A+\lambda a_1+(1-\lambda) a_0
z_1\big)|w|^4+2a_0 A|w|^2+a_1\,A(w+\overline{w})=0,\quad  w\in
\widetilde{\Gamma}_2.
$$
Using \eqref{cart} with $a$ replaced by $a_0$ and $b$ by $a_1$ we
find
\begin{equation}\label{equa}
2(1-\lambda) z_1|w|^6+\big(A+\lambda a_1+(1-\lambda) a_0
z_1\big)|w|^4+a_0 A|w|^2+A c=0,\quad  w\in \widetilde{\Gamma}_2.
\end{equation}
Since $\widetilde{\Gamma}_2$ is connected, either $|w|$ is constant
on $\widetilde{\Gamma}_2$  and we are done, or   $|w|$ takes a
continuum of values on $\widetilde{\Gamma}_2.$ In the second case
the polynomial  obtained by replacing in the left-hand side of
\eqref{equa} $|w|$  by the real variable $t$ has infinitely many
zeroes and hence is the zero polynomial. Thus the coefficient $a_0 A
$ must be zero. Since $\pi a_0 = |D_2|\neq 0$ and $\alpha \neq 1$,
$a_1 =0$ and the proof is complete.
\end{proof}

\subsection{Elliptical interfaces : the proof of Theorem 2}
We turn now to the proof  of Theorem \ref{impresul}.  First we will
prove that if the interior curve is an ellipse then the rigid motion
of the interfaces will force the domains to have the same center of
mass. The equations \eqref{bdd1} and the explicit form of the
function $\gamma_2^+$ will lead to the identification of
$\gamma_1^+,$  via the maximum principle. At this stage we are led
to understand the link between the geometry of the domain and its
inside Cauchy transform $\gamma^+$. This is a kind of inverse
problem of two-dimensional potential theory  that we have already
discussed in the previous section in the context of circles. In the
case at hand we show that the exterior curve $\Gamma_1$ is an
ellipse and we find some information on its shape. Armed with this
precious information we solve explicitly the \mbox{equations
\eqref{bdd1}. }

\subsubsection{{\bf First reduction}}

\begin{Lemm}\label{gamma1ellipse}
Assume that $\omega_0=\chi_{D_1}+(\alpha-1)\chi_{D_2}, \;\alpha \in
\mathbb{R},$ is a rotating vorticity around the origin and that
$\Gamma_2$ is an ellipse. Let $z_2$ be the center of $D_2$. If
$z_2\neq 0$ then the line through the origin and $z_2$ is an axis of
the \mbox{ellipse $\Gamma_2.$} Moreover  $\Gamma_1$  is an ellipse.
\end{Lemm}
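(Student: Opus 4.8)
The plan is to read the equation of motion \eqref{bd1} on the inner interface, where the inner ellipse contributes only explicit quantities, and convert it into a rigid analytic constraint on the Cauchy integral $\gamma_1^+$ of the unknown outer domain. After a rotation we may assume $z_2\in[0,\infty)$; if $\Gamma_2$ is a circle we are in the situation of Theorem~\ref{impresuld} and the conclusion is immediate, so we assume $\Gamma_2$ is a genuine ellipse with centre $z_2$, axis directions $\{e^{i\theta},ie^{i\theta}\}$ and eccentricity parameter $Q_2\in(-1,1)\setminus\{0\}$. I would parametrise $\Gamma_2$ by the exterior conformal map $z=\Phi_2(w)=z_2+R_2e^{i\theta}(w+Q_2/w)$, $|w|=1$, with $R_2=(a_2+b_2)/2$, so that $z'=iw\Phi_2'(w)=iR_2e^{i\theta}(w-Q_2/w)$ is a tangent vector, and substitute into \eqref{bd1} the formulas of Section~4 for an ellipse, namely $\overline z=\overline{z_2}+R_2e^{-i\theta}(1/w+Q_2w)$ on $|w|=1$ and $\gamma_2^+(z)=e^{-2i\theta}Q_2(z-z_2)+\overline{z_2}$. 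Equation \eqref{bd1} then becomes an identity $\mathrm{Re}\,\Theta(w)=0$ on $|w|=1$, where $\Theta(w)=E(w)-iR_2e^{i\theta}(w-Q_2/w)\,\widetilde\gamma_1(w)$, $\widetilde\gamma_1:=\gamma_1^+\circ\Phi_2$, and $E$ is an explicit Laurent polynomial supported on the exponents $-2,\dots,2$ with coefficients built from $\alpha,\Omega,Q_2,R_2,\theta$ and $z_2$.

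The crucial structural observation — and, I expect, the main obstacle — is that $\widetilde\gamma_1$ extends to a holomorphic function on a genuine two-sided annulus about $|w|=1$ and is symmetric under $w\mapsto Q_2/w$. Indeed $\gamma_1^+$ is holomorphic on all of $D_1$, which contains $\overline{D_2}$ and in particular the focal segment of $\Gamma_2$; since $\Phi_2$ maps a neighbourhood of the critical circle $\{|w|^2=|Q_2|\}$ onto a neighbourhood of that focal segment, with critical points exactly over the foci, the composition $\gamma_1^+\circ\Phi_2$ extends holomorphically across $\{|w|^2=|Q_2|\}$, so $\widetilde\gamma_1$ is holomorphic in $\{|Q_2|/\rho<|w|<\rho\}$ for some $\rho>1$. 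As $\Phi_2(w)=\Phi_2(Q_2/w)$ identically, we get $\widetilde\gamma_1(w)=\widetilde\gamma_1(Q_2/w)$, equivalently $c_{-n}=Q_2^nc_n$ for the Laurent coefficients $\widetilde\gamma_1(w)=\sum_n c_nw^n$. Expanding $\mathrm{Re}\,\Theta=0$ into Fourier coefficients on $|w|=1$: for $|m|\ge3$ only $\widetilde\gamma_1$ contributes, and inserting $c_{-n}=Q_2^nc_n$ the identities reduce to $|c_{m-1}-Q_2c_{m+1}|=|Q_2|^m\,|c_{m-1}-Q_2c_{m+1}|$, hence $c_{m-1}=Q_2c_{m+1}$ for $m\ge3$; iterating forces $|c_{2+2j}|$ and $|c_{3+2j}|$ to grow like $|Q_2|^{-j}$ unless $c_2=c_3=0$, which contradicts convergence of the Laurent series in an annulus strictly containing $|w|=1$. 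Therefore $c_n=0$ for $|n|\ge2$ and $c_{-1}=Q_2c_1$, so $\widetilde\gamma_1(w)=c_1(w+Q_2/w)+c_0$, which back on $\Gamma_2$ reads $\gamma_1^+(z)=\frac{c_1e^{-i\theta}}{R_2}(z-z_2)+c_0$; by the identity theorem this holds throughout $D_1$, so $\gamma_1^+$ is affine in $z$.

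It remains to read off the low-order coefficients. Comparing the $m=2$ and $m=-2$ identities forces $e^{i\theta}c_1\in\mathbb{R}$, so the linear coefficient of $\gamma_1^+$ is $Q_1e^{-2i\theta}$ with $Q_1\in\mathbb{R}$; comparing the $m=1$ and $m=-1$ identities forces $c_0=\lambda\overline{z_2}$, where $\lambda=1-2\Omega$. Thus $\gamma_1^+(z)=Q_1e^{-2i\theta}(z-z_2)+\lambda\overline{z_2}$, which may be rewritten as $\gamma_1^+(z)=Q_1e^{-2i\theta}(z-z_1)+\overline{z_1}$ for the unique $z_1$ solving $\overline{z_1}-Q_1e^{-2i\theta}z_1=\lambda\overline{z_2}-Q_1e^{-2i\theta}z_2$ (solvable since $|Q_1|<1$, forced by the Remark following Proposition~\ref{propinv}). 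Applying the rotated version of Proposition~\ref{propinv} then shows $\Gamma_1$ is an ellipse, with centre $z_1$ and axis directions $\{e^{i\theta},ie^{i\theta}\}$, the same as $\Gamma_2$'s; this is the second assertion. For the first one, the origin, being the centre of rotation, is the centre of mass of $\omega_0$ (cf.\ Proposition~\ref{centremass}), and the centroid of an ellipse is its centre, so $z_1|D_1|+(\alpha-1)z_2|D_2|=0$, whence $z_1$ is a real multiple of $z_2$, i.e.\ $\mathrm{Im}(z_1\overline{z_2})=0$. Substituting the explicit value of $z_1$ just obtained and using $Q_1\ne0$ (Theorem~\ref{impresuld}) and $\Omega\ne0$, this collapses to $\mathrm{Im}(e^{2i\theta}\overline{z_2}^{\,2})=0$, which says precisely that $z_2$ points along an axis direction of $\Gamma_2$; since this line passes through the centre $z_2$, it is an axis of $\Gamma_2$.

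To summarise, the serious point is the analyticity/symmetry of $\gamma_1^+\circ\Phi_2$ in the second paragraph: it is the functional equation $\widetilde\gamma_1(w)=\widetilde\gamma_1(Q_2/w)$, not the boundary relation alone, that makes the high-order Fourier coefficients of \eqref{bd1} collapse and pins $\gamma_1^+$ down to an affine map. The remaining difficulty is purely bookkeeping — organising the $|m|\le2$ coefficient identities so as to extract, in turn, the reality of $e^{i\theta}c_1$, the value $c_0=\lambda\overline{z_2}$, and finally the alignment $\mathrm{Im}(e^{2i\theta}\overline{z_2}^{\,2})=0$ — and an elementary application of Proposition~\ref{propinv}.
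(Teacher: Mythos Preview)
Your argument is correct and reaches the same conclusion, but the route is genuinely different from the paper's.

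The paper does not parametrise $\Gamma_2$ at all. Instead it integrates \eqref{bd1} once (passing to primitives $\phi_j$ of $\gamma_j^+$) to turn the tangential equation into
\[
\big(\tfrac{\alpha}{2}-\Omega\big)|z|^2+\hbox{Re}\big((1-\alpha)\phi_2(z)-\phi_1(z)\big)=\text{const.}
\quad\text{on }\Gamma_2,
\]
then solves the Dirichlet problem for $|z|^2$ in the ellipse $D_2$ explicitly (a quadratic polynomial in $z,\overline z$), and applies the maximum principle to the harmonic difference to force the whole bracket to be constant in $D_2$. Differentiating gives $\gamma_1^+$ directly as the explicit affine expression \eqref{gamma1}. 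Your approach replaces this Dirichlet/maximum-principle step by a Fourier argument on the pulled-back circle, the decisive input being the deck symmetry $\Phi_2(w)=\Phi_2(Q_2/w)$ of the Joukowski map together with the fact that $\gamma_1^+$ extends holomorphically across the focal segment; this is what forces the Laurent tail of $\widetilde\gamma_1$ to vanish. Both arguments are short once set up; the paper's is more conceptual and uses nothing about the ellipse beyond the harmonic extension of $|z|^2$, whereas yours isolates a structural mechanism (the two-sheeted symmetry of the Joukowski covering) that is perhaps more portable to other parametrised boundaries.

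One small point of exposition: your appeal to the Remark after Proposition~\ref{propinv} to guarantee solvability for $z_1$ is circular as written, since that Remark presupposes the very form you are trying to put $\gamma_1^+$ into. The fix is immediate and is in fact what the paper does: use the centroid identity \eqref{center}, i.e.\ $\int_{D_1}\overline z\,dA=\int_{D_1}\gamma_1^+\,dA$, which for an affine $\gamma_1^+(z)=Pz+R$ yields $\overline{z_1}=Pz_1+R$ with $z_1$ the centroid of $D_1$, so that $\gamma_1^+(z)=P(z-z_1)+\overline{z_1}$ automatically. After that, your invocation of (the rotated) Proposition~\ref{propinv} and the center-of-mass relation to obtain $\sin(2\theta)=0$ goes through exactly as you wrote.
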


\begin{proof}
First of all we can assume without loss of generality that $z_2$ is
a positive real number. Let $\theta$ denote the angle between the
major axis of the ellipse $\Gamma_2$ and the real axis. We have to
show that $2\theta\equiv0 \,[\pi]$.

Recall that by \mbox{\eqref{bd1}} the equation that describes
rotation with angular velocity $\Omega$ on $\Gamma_2$ is
\begin{equation*}
\hbox{Re}\Big\{\Big(
(\alpha-2\Omega)\overline{z}+(1-\alpha)\gamma_2^+(z)-\gamma_1^+(z)\Big)z^\prime
\Big\}=0,\quad  z\in \Gamma_2,
\end{equation*}
where  $z^\prime$ denotes a tangent vector to $\Gamma_2$ at the
point $z$.  For $j\in\{1,2\},$ let $\phi_j$ denote a complex
primitive of $\gamma_j^+$ on the domain $D_j$. This primitive is
well-defined since  $\gamma_j^+$ is holomorphic  on the simply
connected domain $D_j$. Consequently equation \eqref{bd1} is
equivalent to
\begin{equation}\label{bd2}
\big(\frac{\alpha}{2}-\Omega\big)|z|^2+\hbox{Re}\Big((1-\alpha)\phi_2(z)-\phi_1(z)\Big)=C,
\quad  z\in \Gamma_2,
\end{equation}
for some constant $C$.

Let $\psi$ be the solution of the Dirichlet problem on $D_2$ with
boundary data $|z|^2,\; z\in \Gamma_2.$ Since $\psi$ is harmonic in
$D_2$ and $D_2$ is simply connected, there exists a holomorphic $H$
function on $D_2$ such that $\psi$ is the real part of $H.$ Hence
equation \eqref{bd2} becomes
     $$
     \hbox{Re}\Big(\big(\frac{\alpha}{2}-\Omega\big)H(z)+(1-\alpha)\phi_2(z)-\phi_1(z)-C\Big)=0,\quad  z\in \Gamma_2.
     $$
     The function in the left-hand side of the preceding identity is harmonic in $D_2$ and continuous up to the boundary. By the maximum principle
     this function is identically zero in the domain $D_2.$
     Therefore, since holomorphic functions that take real values on
     a domain are constant,
     $$
     \big(\frac{\alpha}{2}-\Omega\big)H^\prime(z)+(1-\alpha)\phi_2^\prime(z)-\phi_1^\prime(z)=0,\quad  z\in D_2,
     $$
     where prime denotes derivative with respect to $z$.
Hence
\begin{equation}\label{gamma}
     \gamma_1^+(z)=\big({\alpha}-2\Omega\big)\partial_z\psi(z)+(1-\alpha)\gamma_2^+(z),\quad  z\in
     D_2,
     \end{equation}
     because  $2\partial_z \psi (z)=H^\prime(z).$
         This determines completely the function $\gamma_1^+$  in $D_2$ and thus in $D_1,$ by analytic
         continuation. To take full advantage of \eqref{gamma} we need to have explicit expressions for $\gamma_2^+$ and $2\partial_z \psi.$
For $\gamma_2^+$ it is just a matter of applying a translation and a
rotation to \eqref{Eqq4}. We get
\begin{equation}\label{gamma2plus}
\gamma_2^+(z)= Q_2 e^{-2i\theta} (z-z_2)+z_2, \quad z \in D_2.
\end{equation}
For  $\partial_z \psi$ we solve explicitly the Dirichlet problem
defining $\psi$ and then we take a derivative with respect to $z.$
In order  to do so we need to write the equation of the boundary
$\Gamma_2$ in the variables $z$ and $\overline{z}.$
      Consider first  the ellipse $\mathcal{E}= \{(x,y): \frac{x^2}{a^2}+\frac{y^2}{b^2}=1\}.$
      Expressing $x$ and $y$ in terms of $z$ and $\overline{z}$ we
      find for $\mathcal{E}$ the equation
      $$
     A(z^2+\overline{z}^2)+B|z|^2=1,\quad A=\frac14\Big(\frac{1}{a^2}-\frac{1}{b^2}\Big)\quad \hbox{and}\quad B=\frac12\Big(\frac{1}{a^2}+\frac{1}{b^2}\Big).
     $$
Assume now that $a$ and $b$ are the semi-axes of the ellipse
$\Gamma_2.$ Then an equation for $\Gamma_2$ is
     $$
     A\Big(e^{-2i\theta}(z-z_2)^2+e^{2i\theta}(\overline{z}-z_2)^2\Big)+B|z|^2+B {z_2}^2-Bz_2(z+\overline{z})=1.
     $$
     Solving for $|z|^2$ and remarking that the function which gives
     the solution is harmonic in $D_2$ we conclude that
     $$
     \psi(z)=\frac{1}{B}- z_2^2+ z_2(z+\overline{z})- \frac{A}{B}\Big(e^{-2i\theta}(z-z_2)^2+e^{2i\theta}(\overline{z}-z_2)^2\Big).
     $$
and
    \begin{equation}\label{SPD}
    \partial_z\psi(z)=z_2-2\frac{A}{B}e^{-2i\theta}(z-z_2).
    \end{equation}
Inserting \eqref{SPD} and \eqref{gamma2plus} in \eqref{gamma} yields
    \begin{equation}\label{gamma1}
  \gamma_1^+(z)=(1-2\Omega) z_2+ Q_1 e^{-2i\theta} (z-z_2), \quad z \in
     D_1,
    \end{equation}
     where
     $$
     Q_1 = (2\Omega-\alpha) \frac{2A}{B}+(1-\alpha) Q_2.
          $$
We have proved that $\gamma_1^+$ is a first degree polynomial and we
are almost done. First we remark that the assumption that the center
of mass of the initial vorticity is the origin implies that the
centers of mass $z_j$ of $D_j, j=1,2,$ satisfy
\begin{equation}\label{centermass}
z_1 = (1-\alpha) \frac{|D_2|}{|D_1|} z_2.
\end{equation}
In particular $z_1$ is a real number.

It is a general fact that if $D$ is any bounded  domain then
\begin{equation}\label{center}
\int_D \overline{z} \,dA(z) = \int_D \gamma^+(z) \,dA(z).
\end{equation} This follows from \eqref{Cauchydins} and the observation that
$$\int_D \int_D \frac{1}{\zeta-z} \,dA(\zeta)dA(z) =0 $$
because the Cauchy kernel is odd. Taking the mean value of
$\gamma_1^+$ on $D_1$ and using \eqref{gamma1} and \eqref{center}
one obtains
$$
z_1 = (1-2\Omega)z_2 + Q_1 e^{-2\theta i} (z_1-z_2).
$$
Thus $2\theta$  is an integer multiple of $ \pi$ and so the line
through the origin and $z_2$ is an axis of $\Gamma_2.$

We are left with the task of showing that $\Gamma_1$ is an ellipse,
which is easy. Indeed, setting
$$
z_1= \frac{\lambda\pm Q_1}{1\pm Q_1} z_2,
$$
where the minus sign corresponds to $\theta=0$ and the plus sign to
$\theta= \pi/2,$ we obtain, rewriting \eqref{gamma1},
$$
 \gamma_1^+(z)= \pm Q_1 (z-z_1)+ z_1, \quad z \in D_1.
$$
Here the plus sign corresponds to $\theta=0$ and the minus sign to
$\theta= \pi /2.$ Since $z_1$ is real an application of Proposition
\ref{propinv} shows that $\Gamma_1$ is an ellipse, which completes
the proof of \mbox{Lemma \ref{gamma1ellipse}.}
\end{proof}


\subsubsection{\bf{Second reduction}}

We know now that if $\omega_0=\chi_{D_1}+(\alpha-1)\chi_{D_2}$ is a
rotating vorticity and the interior curve is an ellipse, then the
exterior curve is also an ellipse and its axes are parallel to those
of the interior ellipse. However, the only information we have up to
now about the relative position of the center of mass of $\omega_0$
and the centers of the ellipses is that they lie on a straight line.
\begin{Lemm}\label{centersorigin}
Assume that $\omega_0=\chi_{D_1}+(\alpha-1)\chi_{D_2}$ is a rotating
vorticity and that $\Gamma_1$ and $\Gamma_2$ are ellipses. Then the
ellipses are centered at the center of mass of $\omega_0.$
\end{Lemm}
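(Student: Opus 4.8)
The equation \eqref{bdd1} restricted to the \emph{outer} curve $\Gamma_1$ has not yet been used (Lemma \ref{gamma1ellipse} only used it on $\Gamma_2$), and this is where I would extract the information. First a reduction: if $\Gamma_1$ or $\Gamma_2$ is a circle, then by Theorem \ref{impresuld} both are concentric circles, whose common center is obviously their center of mass, so we are done; hence assume $\Gamma_1,\Gamma_2$ are genuine (non-circular) ellipses. By Proposition \ref{centremass} the rotation is around the center of mass, which we place at the origin; by \eqref{centermass}, if $z_2=0$ then $z_1=0$ and the Lemma holds, so assume $z_2\neq0$. By Lemma \ref{gamma1ellipse} we may rotate so that the common axis direction of the two ellipses is the real axis, so that the centers $z_1,z_2$ are real; and \eqref{gamma1} together with Proposition \ref{propinv} gives $\gamma_j^+(z)=\kappa_j(z-z_j)+z_j$ for $z\in D_j$ with $\kappa_j\in(-1,1)\setminus\{0\}$, $j=1,2$. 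Parametrize $\Gamma_j$ by $z=\Phi_j(w)=z_j+R_j\bigl(w+\kappa_j w^{-1}\bigr)$, $|w|=1$, $R_j>0$; each $\Phi_j$ extends to a conformal map of $\{|w|>1\}$ onto $\CC\setminus\overline{D_j}$, and a short computation as in \eqref{Eqq4} gives, for $z$ outside $\overline{D_2}$, $\gamma_2^-(z)=\gamma_2^+(z)-S_2(z)=R_2(\kappa_2^2-1)/v$, where $v=\Phi_2^{-1}(z)$, $|v|>1$, and $S_2(z)=z_2+R_2(v^{-1}+\kappa_2 v)$ is the Schwarz function of $\Gamma_2$.

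Next I would restrict \eqref{bdd1} to $\Gamma_1$. Writing $z=\Phi_1(w)$, $|w|=1$, and $v=v(w):=\Phi_2^{-1}(\Phi_1(w))$, the relation $\Phi_2(v)=\Phi_1(w)$ reads $v+\kappa_2/v=\delta+\mu\bigl(w+\kappa_1 w^{-1}\bigr)$ with $\delta=(z_1-z_2)/R_2\in\RR$ and $\mu=R_1/R_2>0$. Because $\overline{D_2}$ lies at positive distance from $\Gamma_1$, the map $\Phi_1$ sends a neighbourhood of $\{|w|\geq1\}$ into $\CC\setminus\overline{D_2}$, so $v(w)$ is holomorphic and non-vanishing for $|w|>1-\eps$ with $v(\infty)=\infty$; hence $1/v(w)=\sum_{n\geq1}d_nw^{-n}$ there, with all $d_n$ real (since $v$ is real on the ray $w>1$), $d_1=1/\mu$ and $d_2=-\delta/\mu^2$. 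Substituting $\overline z=z_1+R_1(w^{-1}+\kappa_1 w)$, $\gamma_1^+(z)=z_1+\kappa_1 R_1(w+\kappa_1 w^{-1})$, the tangent vector $z^\prime=iR_1(w-\kappa_1 w^{-1})$ and the formula for $\gamma_2^-$ into \eqref{bdd1} and taking the real part, the equation on $\Gamma_1$ becomes, after dividing by $R_1$,
$$2\Omega z_1(1+\kappa_1)\sin t-R_1\kappa_1(1-4\Omega-\kappa_1^2)\sin 2t+(1-\alpha)R_2(\kappa_2^2-1)\sum_{m\geq1}f_m\sin mt=0,\qquad t\in\RR,$$
where $(w-\kappa_1 w^{-1})/v(w)=\sum_{m\geq0}f_mw^{-m}$ for $|w|\geq1$, so that $f_1=d_2$ and $f_m=d_{m+1}-\kappa_1 d_{m-1}$ for $m\geq2$. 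Matching Fourier coefficients: for $m\geq3$ we get $(1-\alpha)R_2(\kappa_2^2-1)f_m=0$, hence $f_m=0$ since $\alpha\neq1$, $R_2>0$, $\kappa_2^2<1$; equivalently $d_n=\kappa_1 d_{n-2}$ for all $n\geq4$. The coefficient of $\sin t$ gives $2\Omega z_1(1+\kappa_1)+(1-\alpha)R_2(\kappa_2^2-1)d_2=0$.

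The crux is the recursion $d_n=\kappa_1 d_{n-2}$, $n\geq4$: summing the geometric tails,
$$v(w)=\frac{w(w^2-\kappa_1)}{d_1w^2+d_2w+(d_3-\kappa_1 d_1)},$$
so $v$ is a \emph{rational} function. Since $v=\tfrac12\bigl(L\pm\sqrt{L^2-4\kappa_2}\bigr)$ with $L(w)=\delta+\mu(w+\kappa_1 w^{-1})=(\mu w^2+\delta w+\mu\kappa_1)/w$, rationality of $v$ forces $\sqrt{L^2-4\kappa_2}$ to be rational; as a polynomial with a rational square root is a perfect square, the quartic $N(w)=(\mu w^2+\delta w+\mu\kappa_1)^2-4\kappa_2w^2$ must equal $M(w)^2$ with $M$ of degree $2$. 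Putting $A(w)=\mu w^2+\delta w+\mu\kappa_1$ and choosing the sign of $M$ so that $A$ and $M$ share the leading term $\mu w^2$, the factorization $(A-M)(A+M)=4\kappa_2w^2$ forces $A-M$ to be a nonzero constant $\gamma$ (it has degree $\leq1$, while $A+M$ and the right-hand side have degree $2$), whence $A+M=2A-\gamma=(4\kappa_2/\gamma)w^2$; comparing the coefficient of $w$ gives $2\delta=0$. Hence $\delta=0$, i.e. $z_1=z_2$, and then $d_2=-\delta/\mu^2=0$, so the $\sin t$-equation reduces to $2\Omega z_1(1+\kappa_1)=0$; since $\Omega\neq0$ (the rotation is non-trivial, Definition \ref{defa1}) and $1+\kappa_1\neq0$, we conclude $z_1=z_2=0$, contradicting $z_2\neq0$. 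Therefore both ellipses are centered at the center of mass of $\omega_0$. (As a by-product one also reads off $\gamma=2\mu\kappa_1$ and $\kappa_2=\mu^2\kappa_1$, the confocality relation, which together with the $\sin 2t$-equation will feed the next step.) The step I expect to be the main obstacle is the bookkeeping in reducing \eqref{bdd1} on $\Gamma_1$ to the displayed Fourier identity, because of the signs $\kappa_j=\pm Q_j$ coming from the two possible orientations $\theta\in\{0,\pi/2\}$ in Lemma \ref{gamma1ellipse}; the perfect-square argument, which packages infinitely many coefficient equations into the single conclusion $\delta=0$, is short once that reduction is in place.
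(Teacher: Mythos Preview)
Your argument is correct and takes a genuinely different route from the paper's. Both proofs start from \eqref{bdd1} on $\Gamma_1$, but the paper exploits the central symmetry $w\mapsto -w$, transfers the problem to the exterior via a Dirichlet problem, and then expands the resulting holomorphic identity at infinity: the coefficient of $1/w^3$ gives $d=0$ or $d^2=\tfrac34(c_1^2-c_2^2)$, the coefficient of $1/w^5$ forces $c_1^2/c_2^2\in\{1,11\}$, and the spurious value $11$ is eliminated only after computing the coefficient of $1/w^7$. Your conformal parametrization instead uses \emph{all} higher Fourier modes at once: the vanishing of the $\sin mt$ coefficients for $m\geq3$ yields the recursion $d_n=\kappa_1 d_{n-2}$, which forces $v=\Phi_2^{-1}\circ\Phi_1$ to be rational, and then a two-line perfect-square argument on the quartic $N(w)=(\mu w^2+\delta w+\mu\kappa_1)^2-4\kappa_2 w^2$ gives $\delta=0$ directly. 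This is cleaner --- no case split, no $1/w^7$ computation --- and it produces the confocality relation $\kappa_2=\mu^2\kappa_1$ (hence also the agreement of the signs of $\kappa_1,\kappa_2$) for free, which the paper obtains only later. The Fourier bookkeeping you flag as the main obstacle is indeed routine once $\bar z$, $\gamma_1^+$, $z'$ and $\gamma_2^-$ are written in the $w$-variable; allowing $\kappa_j\in(-1,1)\setminus\{0\}$ absorbs the orientation ambiguity $\theta\in\{0,\pi/2\}$ from Lemma~\ref{gamma1ellipse} without any case distinction.
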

\begin{proof}
Assume, without loss of generality, that the center of mass of
$\omega_0$ is the origin. Let $z_j$ be the center of $\Gamma_j,\,
j=1,2.$  We may also assume that $z_2$ is real. Then $z_1$ is also
real because of \eqref{centermass}. By \eqref{bdd1} the equation
that describes rotation with angular velocity $\Omega$ on $\Gamma_1$
is
$$
\hbox{Re}\Big\{\Big(\lambda \overline{z}+(1-\alpha)
{\gamma_2^-}(z)-\gamma_1^+(z)\Big)z^\prime \Big\}=0,\quad  z\in
\Gamma_1,
$$
with $\lambda=1-2\Omega.$  Since $z_1$ is real, by the first
reduction $ \gamma_1^+(z)= \pm Q_1(z-z_1)+z_1.$ Assume that the sign
in front of $Q_1$ is plus (the same argument will work with the
minus sign). Then
$$
\hbox{Re}\Big\{\Big(\lambda \overline{z}+(1-\alpha)
{\gamma_2^-}(z)-Q_1(z-z_1)-z_1\Big)z^\prime \Big\}=0,\quad  z\in
\Gamma_1.
$$
Setting $w=z-z_1$ the preceding equation becomes
\begin{equation}\label{dsr1}
\hbox{Re}\Big\{\Big(\lambda \overline{w}+(1-\alpha)
{\gamma_2^-}(z_1+w)-Q_1w+(\lambda-1)z_1\Big) i(A_1w+B_1\overline{w})
\Big\}=0, \quad w\in -z_1+\Gamma_1.
\end{equation}
 Here $i(A_1w+B_1\overline{w})$ is the tangent vector to the
ellipse $-z_1+\Gamma_1$ at the point $w$. The expression of $A_1$
and $B_1$ in terms of the length of the semi-axes $a_1$ and $b_1$ of
$\Gamma_1$ can be obtained by using the standard parametrization of
an ellipse. One gets
\begin{equation*}\label{tangentellipse}
A_1 = \frac{a_1^2 +b_1^2}{2 a_1 b_1} \quad\quad \quad
\mbox{and}\quad\quad \quad B_1 = \frac{b_1^2 -a_1^2}{2 a_1 b_1}.
\end{equation*}

The ellipse  $\widetilde{\Gamma}_2=-z_2+\Gamma_2$ is centered at the
origin and its axes lies along the coordinate axes. Set
$$
{h}_2(z)=\frac{1}{2\pi
i}\int_{\widetilde{\Gamma}_2}\frac{\overline{\xi}}{\xi-z}d\xi, \quad
z \in \mathbb{C}\setminus \overline{D_2}.
$$
As we mentioned in \eqref{Eqq4} ${h}_2$ is given by
$$
h_2(z)=-\frac{2a_2 b_2}{z\Big(1+\sqrt{1-\frac{c_2^2}{z^2}}\Big)},
\quad z \in \mathbb{C}\setminus \overline{D_2}.
$$
Translating we see that
$$
{\gamma_2^-}(z)={h}_2(z-z_2), \quad  z \notin \overline{D}_2.
$$
Let  $d = z_1-z_2.$  Then equation \eqref{dsr1} can be rewritten as
$$
\hbox{Re}\Big\{\Big(\lambda \overline{w}+(1-\alpha)
{h}_2(w+d)-Q_1w+(\lambda-1)z_1\Big)i(A_1w+B_1\overline{w}) \Big\}=0,
\quad w\in -z_1+\Gamma_1.
$$
The ellipse $-z_1+\Gamma_1$ is centered at  zero and invariant under
the mapping $w \rightarrow -w$. Therefore writing the above equation
for $-w$ and subtracting both equations yields, for $ w\in
-z_1+\Gamma_1 ,$
\begin{equation}\label{Ez12}
2(\lambda-1)
z_1\hbox{Im}(A_1w+B_1\overline{w})+(1-\alpha)\hbox{Im}\Big\{\Big(
{h}_2(w+d)+ {h}_2(-w+d)\Big)(A_1w+B_1\overline{w})\Big\}=0.
\end{equation}
Denote by  $\mathbb{C}_\infty$ the extended complex plane (or
Riemann sphere). Let $U$ be the domain enclosed by the ellipse $
-z_1+\Gamma_1.$ Our next task is to find a solution to the Dirichlet
problem in the domain $\mathbb{C}_{\infty}\setminus \overline{U}$ in
the Riemann sphere with boundary data
$\operatorname{Im}(A_1w+B_1\overline{w}).$
 By \eqref{cauc12}
 \begin{equation}\label{Ez124}
 \overline{w}=Q_1 w-h_1(w), \quad w\in  -z_1+\Gamma_1,
 \end{equation}
 with
 $$
h_1(w)=-\frac{2a_1 b_1}{w\Big(1+\sqrt{1-\frac{c_1^2}{w^2}}\Big)},
\quad z \in \mathbb{C} \setminus {U}.
$$
Hence
\begin{eqnarray*}
\hbox{Im}({\overline{w}})&=&Q_1\hbox{Im}(w)-\hbox{Im}(h_1(w))\\
&=&-Q_1\hbox{Im}(\overline{w})-\hbox{Im}(h_1(w)), \quad w\in
-z_1+\Gamma_1,
\end{eqnarray*}
and so
$$
\hbox{Im}({\overline{w}})=-\frac{1}{1+Q_1}\hbox{Im}(h_1(w)), \quad
w\in  -z_1+\Gamma_1,
$$
and
\begin{eqnarray*}
\hbox{Im}(A_1w+B_1\overline{w})&=&(B_1-A_1)\hbox{Im}({\overline{w}})\\
&=&\frac{A_1-B_1}{1+Q_1}\hbox{Im}(h_1(w)), \quad w\in -z_1+\Gamma_1.
\end{eqnarray*}
The right-hand side above is harmonic in
$\mathbb{C}_{\infty}\setminus \overline{U}$ and then is the solution
of the Dirichlet problem in $\mathbb{C}_{\infty}\setminus
\overline{U}$ with boundary data the left-hand side. Inserting this
identity into \eqref{Ez12} and using \eqref{Ez124} and the relation
$A_1+B_1 Q_1=1$ one gets
$$
\operatorname{Im}\Big\{\mathcal{A}\,z_1\, h_1(w)+(1-\alpha)\Big(
h_2(w+d)+ h_2(-w+d)\Big)\big(w-B_1 h_1(w)\big)\Big\} =0, \quad w\in
-z_1+\Gamma_1,
$$
where $\mathcal{A}$ stands for $ 2\frac{A_1-B_1}{1+Q_1}(\lambda-1).$
In the left-hand side of the preceding identity one is taking the
imaginary part of a holomorphic function in $\mathbb{C} \setminus
\overline{U}.$  Hence, for some constant $C,$
\begin{equation}\label{hol}
\mathcal{A}\,z_1\,h_1(w)+(1-\alpha)\Big(  {h}_2(w+d)+
{h}_2(-w+d)\Big)\big(w-B_1h_1(w)\big)=C, \quad w \in \mathbb{C}
\setminus \overline{U}.
\end{equation} Observe that
$$
C=(1-\alpha)\lim_{w\to\infty} w\big({h}_2(w+d)+ {h}_2(-w+d)\big)=0.
$$
Computing the coefficient of $\frac{1}{w}$ in the expansion at
$\infty$ of the left-hand side of \eqref{hol} we get the relation
\begin{equation}\label{coe}
\mathcal{A} z_1 a_1 b_1=2(1-\alpha) a_2 b_2 d.
\end{equation}
 Set
 $$
 F_j(w)=\frac{1}{w\Big(1+\sqrt{1-\frac{c_j^2}{w^2}}\Big)},\quad j=1,2, \quad w \in \mathbb{C}
\setminus \overline{U}.
 $$
 An easy argument based on \eqref{hol} and \eqref{coe} gives
\begin{equation}\label{zero}
  2d\, F_1(w)+\Big(F_2(w+d)+F_2(-w+d)\Big)\big(w-c_1^2F_1(w)\big)=0, \quad w \in \mathbb{C} \setminus
\overline{U}.
 \end{equation}
The function in the left-hand side above is odd and holomorphic at
$\infty$ so that in its expansion in powers of $1/w$ the even powers
vanish identically. The coefficient of $1/w$ also vanishes
identically. Instead, the fact that the other odd powers vanish,
because of \eqref{zero}, provides a countable family of equations in
the parameters $d, c_1$ and $c_2.$  Our goal is to show that $d=0$
using the equations corresponding to the coefficients of $1/w^3,
1/w^5$ and $1/w^7$. Recall that then $z_1=z_2=0$ and we are done.

 The  coefficient of $1/w^3$ is
\begin{equation*}\label{c3}
d \, \left(\frac{3}{4}  c_1^2-\frac{3}{4}  c_2^2 -d^2 \right).
\end{equation*}
Hence either $d=0$  or
\begin{equation}\label{di}
d^2 =  \frac{3}{4} \left(  c_1^2- c_2^2\right).
\end{equation}
The coefficient of $1/w^5$ is
\begin{equation}\label{c5}
\frac{1}{8} d \, \left(2c_1^4-20c_2^2d^2-5c_2^4 -8d^4 + 4 c_1^2 d^2
+ 3 c_1^2 c_2^2 \right).
\end{equation}
As before, if  $d=0$ we are done and so we can assume that this is
not the case. If $c_j =0$ for $j=1$ or $j=2,$ then $\Gamma_j$ is a
circle and this case has been dealt with in subsection 5.1.

Dividing in \eqref{c5} by $c_2^4,$ eliminating the even powers of
$d$ by means of \eqref{di} and setting $q=c_1/c_2$ we get $
q^2-12q+11=0,$ which yields $q =1 $ or $q=11.$ If $q=1$ then $d=0$
by \eqref{di}. Let $q=11.$  The coefficient of $1/w^7$ turns out to
be
\begin{equation}\label{c7}
\frac{1}{64} d \, \left(-366 c_2^2 d^4 -280c_2^4d^2-35c_2^6 +9 c_1^6
-64d^6+6 c_1^4 c_2^2 +  80 c_1^2 c_2^2 d^2 + 20 c_1^2 c_2^4 + 8
c_1^2 d^4 \right).
\end{equation}
Eliminating the even powers of $d$ in \eqref{c7} by means of
\eqref{di} and  setting $c_1^2 = 11 c_2^2$ we obtain
\begin{equation*}\label{c72}
\frac{-1450}{64} d \, c_2^6 =0 .
\end{equation*}
Since we are in the case $c_2 \neq 0,$ we conclude that $d=0$, which
completes the proof.
\end{proof}

     \subsubsection{ {\bf Resolution of the boundary equations }}
     Up to now we have shown that if the interior curve is an ellipse then necessarily the exterior curve is an ellipse
      with the same center and parallel axes. Our next target is to give a complete description of the parameters $\lambda,\alpha, Q_1$ and $Q_2$
       in order to get a uniform rotation. This will complete the proof of Theorem
       \ref{impresul}.

       We start by investigating the equation on the interior curve $\Gamma_2$.
\vspace{0,2 cm}

 {\bf Equation on $\Gamma_2.$}
     Recall that the equation \eqref{bd1}
that describes rotation with angular velocity $\Omega$ on $\Gamma_2$
is

           $$
     \hbox{Re}\Big\{\Big( (\alpha-2\Omega)\overline{z}+\big[(1-\alpha) Q_2 -Q_1\big] z\Big)z^\prime  \Big\}=0, \quad z\in \Gamma_2.
     $$
     We have used the fact that $\gamma_j^+(z)= Q_j \,z, \,j = 1, 2.$ As we mentioned before, a straightforward computation
     shows that a tangent vector to the ellipse $\Gamma_j$ at the point $z$ is given by
     $$
     z^\prime=i(A_j z+B_j \overline{z}), \quad A_j=\frac{a_j^2+b_j^2}{2a_j b_j},\quad B_j= \frac{b_j^2-a_j^2}{2a_j b_j}\cdot
     $$
     Recall that $c_j^2 = a_j^2-b_j^2
     $ gives the foci of the ellipse.
     Hence we obtain
     $$
     \hbox{Re}\Big\{ i\Big(\big[(1-\alpha) Q_2 -Q_1\big] A_2 z^2+(\alpha-2\Omega)B_2\overline{z}^2 \Big) \Big\}=0,  \quad z \in \Gamma_2,
     $$
     which is equivalent to
     $$
     \hbox{Re}\Big\{ i\Big(\big[(1-\alpha) Q_2 -Q_1\big] A_2 -(\alpha-2\Omega)B_2 \Big)z^2 \Big\}=0, \quad z\in \Gamma_2,
     $$
     This condition is satisfied only when
     \begin{equation}\label{Eqs2}
    \big[(1-\alpha) Q_2 -Q_1\big] A_2 +(2\Omega-\alpha)B_2=0.
     \end{equation}
     We would to write this equation in terms of $Q_1, Q_2$ and $\Omega$ only. From the elementary  identities
     $$
     A_j+B_j Q_j=1 \quad\hbox{and}\quad A_j^2-B_j^2=1
     $$
     we get
     \begin{equation}\label{Eq8}
     A_j=\frac{1+Q_j^2}{1-Q_j^2}\quad\quad\quad\mbox{and}\quad\quad\quad B_j=\frac{-2Q_j}{1-Q_j^2}.
     \end{equation}
     Thus  \eqref{Eqs2} becomes
     \begin{equation}\label{eq3}
     \big[(\alpha-1) Q_2 +Q_1\big] (1+Q_2^2) =2(\alpha-2\Omega)Q_2.
     \end{equation}

     \vspace{0,2 cm}

     {\bf{Equation on $\Gamma_1.$}}
     Using equation \eqref{bdd1} on $\Gamma_1$, we get
   \begin{eqnarray*}
     \hbox{Re}\big\{i \lambda B_1\overline{z}^2+\Big((1-\alpha){\gamma_2^-}(z)-Q_1 z\Big) i(A_1 z+B_1 \overline{z})
     \big\}=0, \quad z\in \Gamma_1.
     \end{eqnarray*}
     Since $-\hbox{Re}\{i  Q_1 A_1 {z}^2\}= \hbox{Re}\{i Q_1 A_1 \overline{z}^2\},$
    \begin{equation}\label{eqgam1}
 \hbox{Re}\Big\{i (\lambda B_1+Q_1 A_1) \overline{z}^2+(1-\alpha){\gamma_2^-}(z) i(A_1z+B_1 \overline{z})\Big\}=0, \,\quad z\in \Gamma_1.
    \end{equation}
     Let us introduce the function
     $$G(z)= (\lambda B_1+Q_1 A_1) \overline{z}^2+(1-\alpha){\gamma_2^-}(z)(A_1z+B_1 \overline{z}).$$
      Since on  $\Gamma_1$ we have $\overline{z}=Q_1
      z-{\gamma_1^-}(z),$ $G(z)$ can be written as
         $$
     G(z)=(\lambda B_1+A_1 Q_1) \overline{z}^2+(1-\alpha)(A_1+B_1 Q_1) z {\gamma_2^-}(z)-(1-\alpha)B_1 {\gamma_1^-}(z) {\gamma_2^-}(z).
     $$
     Setting $M=\lambda B_1+Q_1 A_1$ and using the identity  $A_1+B_1 Q_1=1$ we find
     $$
     G(z)=M\overline{z}^2+(1-\alpha)z{\gamma_2^-}(z)-(1-\alpha)B_1 {\gamma_1^-}(z) {\gamma_2^-}(z).
     $$
     Thus equation \eqref{eqgam1} on $\Gamma_1$ becomes
     \begin{equation}\label{Eq7}
     \operatorname{Im}\big\{M\overline{z}^2+(1-\alpha)z{\gamma_2^-}(z)-(1-\alpha)B_1 {\gamma_1^-}(z) {\gamma_2^-}(z)\big\}=0,\, \quad z\in \Gamma_1.
     \end{equation}
    The next step is to solve the Dirichlet problem on the domain $\mathbb{C_\infty}\setminus
    \overline{D_1}$ of the Riemann sphere $\mathbb{C_\infty}$ with boundary data $\operatorname{Im} ( \overline{z}^2).$
With this goal in mind recall the identity
   \begin{equation}\label{schwarz}
    \overline{z}=Q_1 z-{\gamma_1^-}(z), \quad z \in \Gamma_1,
   \end{equation}
     where
     $$
{\gamma_1^-}(z)=\frac{-2a_1b_1}{z\Big(1+\sqrt{1-\frac{c_1^2}{z^2}}\Big)},
\quad z \in \mathbb{C} \setminus \overline{D_1}.
     $$
    Squaring \eqref{schwarz} we obtain
     $$
     \overline{z}^2=Q_1^2 z^2-2Q_1z
     {\gamma_1^-}(z)+\{\gamma_1^-(z)\}^2,  \quad z \in \Gamma_1.
     $$
   By   \eqref{eq4}
  \begin{eqnarray*}
   \overline{z}^2&=&Q_1^2 z^2+\frac{Q_1}{2a_1 b_1}\big(c_1^2 \{\gamma_1^-(z)\}^2+4 a_1^2 b_1^2  \big)+\{\gamma_1^-(z)\}^2\\
   &=&Q_1^2 z^2+2Q_1 a_1 b_1+\big(1+\frac{Q_1 c_1^2}{2a_1 b_1}\big)
   \{\gamma_1^-(z)\}^2, \quad z \in \Gamma_1.
  \end{eqnarray*}
   It is easy to check that
   $$
   1+\frac{Q_1 c_1^2}{2a_1 b_1}=\frac{a_1^2+b_1^2}{2a_1 b_1}=A_1.
   $$
   Consequently
   \begin{eqnarray*}
  \operatorname{Im} \,\overline{z}^2&= &Q_1^2\operatorname{Im}\, z^2+A_1\operatorname{Im} \,\{\gamma_1^-(z)\}^2\\
  &=&-Q_1^2\operatorname{Im}\,\overline{z}^2+A_1\operatorname{Im}\,
  \{\gamma_1^-(z)\}^2, \quad z \in \Gamma_1.
   \end{eqnarray*}
   This gives
   $$
    \operatorname{Im}\, \overline{z}^2=\frac{A_1}{1+Q_1^2} \operatorname{Im} \,\{\gamma_1^-(z)\}^2,\quad \quad z\in
    \Gamma_1,
   $$
   which tells us that the function on the right-hand side is the
   solution of the Dirichlet problem in $\mathbb{C_\infty}\setminus
    \overline{D_1}$ with boundary data given by the left-hand side.
   Inserting this into \eqref{Eq7}  yields
   $$
  \operatorname{Im}\,\Big( \frac{M A_1}{1+Q_1^2}  \{\gamma_1^-(z)\}^2+(1-\alpha)z{\gamma_2^-}(z)-(1-\alpha)B_1 {\gamma_1^-}(z)
   {\gamma_2^-}(z) \Big)=0, \quad z\in \Gamma_1.
   $$
Since the function inside the imaginary part in the preceding
identity is holomorphic on $\mathbb{C_\infty} \setminus
\overline{D_1},$ it is constant. In other words, for some constant
$C,$
   $$
   \frac{MA_1}{1+Q_1^2}  \{\gamma_1^-(z)\}^2+(1-\alpha)z{\gamma_2^-}(z)-(1-\alpha)B_1 {\gamma_1^-}(z) {\gamma_2^-}(z)=C,\quad \quad z\in \CC\backslash D_1.
   $$
   In view of \eqref{eq4} we obtain
   $$
    -\frac{MA_1}{1+Q_1^2}  \{\gamma_1^-(z)\}^2+(1-\alpha)B_1 {\gamma_1^-}(z) {\gamma_2^-}(z)+(1-\alpha)\frac{c_2^2}{4a_2 b_2}\{\gamma_2^-(z)\}^2=0,\quad  \quad z\in  \CC\backslash D_1.
   $$
   and recalling that  $B_2 = -c_2^2/2 a_2 b_2 $
   $$
    2\frac{MA_1}{1+Q_1^2}  \{\gamma_1^-(z)\}^2-2(1-\alpha)B_1 {\gamma_1^-}(z) {\gamma_2^-}(z)+(1-\alpha)B_2\{\gamma_2^-(z)\}^2=0,\quad  \quad z\in \CC\backslash D_1.
   $$
Dividing this equation by $\{\gamma_2^-(z)\}^2$  we get a second
degree polynomial equation in the unknown $\gamma_1^- /\gamma_2^-,$
namely,
   $$
   2\frac{MA_1}{1+Q_1^2}  \Big(\frac{{\gamma_1^-}(z)}{{\gamma_2^-}(z)}\Big)^2-2(1-\alpha)B_1 \frac{{\gamma_1^-}(z)}{ {\gamma_2^-}(z)}+(1-\alpha)B_2=0, \quad \quad z\in \CC\backslash D_1.
   $$
   This implies that $\frac{{\gamma_1^-}(z)}{{\gamma_2^-}(z)}=\mu$ with $\mu$ a constant. Consequently,
   $$
   c_1=c_2\quad\hbox{and}\quad \mu=\frac{a_1b_1}{a_2 b_2}.
   $$
   In particular the ellipses $\Gamma_1$ and $\Gamma_2$ are
   confocal. Moreover
   $$
    2\frac{MA_1}{1+Q_1^2}  \big(\frac{a_1b_1}{a_2 b_2}\big)^2-2(1-\alpha)B_1 \frac{a_1b_1}{ a_2b_2}+(1-\alpha)B_2=0.
   $$
   One can easily check that
   $$
    \frac{a_1b_1}{ a_2b_2}=\frac{B_2}{B_1} \quad \quad \mbox{and} \quad\quad -2B_1 \frac{a_1b_1}{ a_2b_2}+B_2=-B_2.
   $$
   This yields
   \begin{equation}\label{Eq10}
   2\frac{MA_1}{1+Q_1^2}B_2=(1-\alpha)B_1^2.
   \end{equation}
   We will rewrite this equation in terms of  $Q_1, Q_2$ and $\lambda$. By \eqref{Eq8}  equation \eqref{Eq10} reduces to
   \begin{equation}\label{Eqss1}
   -\frac{MQ_2}{1-Q_2^2}=(1-\alpha)\frac{Q_1^2}{1-Q_1^2}\cdot
   \end{equation}
  Now $M$ can be expressed as
  \begin{eqnarray*}
   M&=& \lambda B_1+Q_1 A_1\\
   &=&(\lambda-1)B_1+B_1+Q_1 A_1\\
   &=&(1-\lambda)\frac{2Q_1}{1-Q_1^2}-Q_1.
   \end{eqnarray*}
   Thus equation\eqref{Eqss1} becomes, if $Q_1\neq0,$
    \begin{eqnarray*}
   Q_1 Q_2  \big(Q_1+(\alpha-1)Q_2\big)&=& (2\lambda-1)Q_2+(\alpha-1)
   Q_1,
   \end{eqnarray*}
 which is equivalent to
   $$
  \big((1-\alpha)+Q_1Q_2\big)\big(Q_1+(\alpha-1) Q_2\big)=\big(2\lambda-1-(1-\alpha)^2\big) Q_2.
   $$
   Combining this equation with \eqref{eq3} we get the system
   \begin{equation}\label{ss12}
 \left\{
\begin{array}{ll}
  \big((1-\alpha)+Q_1Q_2\big)\big(Q_1+(\alpha-1) Q_2\big)=\big(2\lambda-1-(1-\alpha)^2\big) Q_2 \\
 (1+Q_2^2)\big(Q_1+(\alpha-1) Q_2\big)=2(\lambda+\alpha-1) Q_2.
\end{array} \right.
     \end{equation}
     To solve this system  we distinguish  two cases.
     \vspace{0,2cm}

     {\bf Case $1$}: $Q_1+(\alpha-1)Q_2=0$. Since the ellipse $\Gamma_2$ is not a circle then $Q_2\neq0$
      and the second equation of the preceding system gives necessary $\lambda=1-\alpha.$
       Inserting this value into the first equation of \eqref{ss12} yields $\alpha=0$ and so $Q_1=Q_2$. The latter condition
        is impossible because the ellipses are confocal and different.
       \vspace{0,2cm}

       {{\bf Case $2$}: $Q_1+(\alpha-1)Q_2\neq0$}. Dividing the first equation in  \eqref{ss12}
       by the second we get
  $$
  \frac{1-\alpha+Q_1Q_2}{1+Q_2^2}=\frac{2\lambda-1-(1-\alpha)^2}{2(\lambda+\alpha-1)}\triangleq C.
  $$
   Hence
   $$
   1-\alpha+Q_1Q_2=C(1+Q_2^2).
   $$
     Multiplying the second equation of \eqref{ss12} by $Q_2$ and using the previous identity
     we see that
     $$
     \big(C+\alpha-1\big)\big(1+Q_2^2\big)^2=2(\lambda+\alpha-1)Q_2^2.
     $$
     Thus
     \begin{equation}\label{ident11}
     \frac{Q_2^2}{(1+Q_2^2)^2}=\frac{C+\alpha-1}{2(\lambda+\alpha-1)}\cdot
     \end{equation}
    Recalling that $1-\lambda=2\Omega$ elementary arithmetics leads to
    \begin{eqnarray*}
    \frac{C+\alpha-1}{2(\lambda+\alpha-1)}&=&\frac{\alpha^2+2\alpha(\lambda-1)}{4(\lambda+\alpha-1)^2}\\
    &=&\frac{\alpha^2-4\alpha\Omega}{4(\lambda-1+\alpha)^2}\cdot  \end{eqnarray*}

    Set $\rho= \frac{4Q_2^2}{(1+Q_2^2)^2}.$  Then
    equation \eqref{ident11} is
    $$
    4\rho\,\Omega^2+4\alpha(1-\rho)\Omega+\alpha^2(\rho-1)=0.    $$
    The solutions of the quadratic equation above are
    $$
    \Omega_{\pm}=\alpha\frac{(\rho-1)\pm\sqrt{1-\rho}}{2\rho},
    $$
   which can be readily written as
    $$
    \Omega_{+}=\alpha\,\frac{1-Q_2^2}{4}, \quad  \Omega_{-}=\alpha\frac{Q_2^2-1}{4 Q_2^2}.
    $$
    From the second equation in \eqref{ss12} the $Q_1$ associated to $\Omega_+$ is given by
    \begin{eqnarray*}
    Q_1&=&{Q_2}\left(\frac{2\alpha-4\Omega_+}{1+Q_2^2}+1-\alpha  \right)\\
    &=& Q_2.
    \end{eqnarray*}
    Since the ellipses are confocal this means that they are the same, which is not the case. The value
     of $Q_1$ associated to $\Omega_{-}$ is given by
    $$
 Q_1=Q_2\left(\frac{\alpha}{Q_2^2}+1-\alpha \right).
    $$
   Recall that the ellipses are confocal and $\overline{D_2}\subset D_1.$ Then $ 0 < Q_1 /Q_2 < 1,$  which is equivalent to
       $$
    -\frac{Q_2^2}{1-Q_2^2}<\alpha<0.
    $$
    In conclusion, the ellipses rotate with the same angular velocity $\Omega$ if and only if we have the relations
    \begin{equation*}\label{assump}
     \Omega=\alpha\,\frac{Q_2^2-1}{4 Q_2^2},\quad Q_1=Q_2\left(\frac{\alpha}{Q_2^2}+1-\alpha  \right)\quad
\hbox{and}\quad -\frac{Q_2^2}{1-Q_2^2}<\alpha<0,
    \end{equation*}
which completes the proof of Theorem \ref{impresul}.

\begin{gracies}
The authors are grateful to Joan Josep Carmona for an illuminating
conversation concerning the proof of Theorem \ref{impresul} and to
Luis Vega for suggesting that the study of doubly connected V-states
might be interesting. J. Mateu and J. Verdera acknowledge generous
support from the grants 2009SGR420 (Generalitat de Catalunya) and
MTM2010-15657 (Ministerio de Ciencia e Innovaci\'{o}n).
\end{gracies}


\begin{thebibliography}{9999}

\bibitem{A} H.\ Aref, {\it Integrable, chaotic, and
turbulent vortex motion in two-dimensional flows},  Ann. Rev. Fluid
Mech., {\bf 15} (1983),  345--389.


%
\bibitem{BC} A.L.\ Bertozzi and P.\ Constantin, {\it Global regularity for vortex
patches}, Commun. Math. Phys. {\bf 152} (1993),  19--28.

\bibitem{BM} A.L.\ Bertozzi and A.J.\ Majda, {\it
Vorticity and Incompressible Flow}, Cambridge texts in applied
Mathematics, Cambridge University Press, Cambridge, (2002).

\bibitem{B} J.\ Burbea, {\it Motions of vortex patches}, Lett.
Math. Phys. {\bf 6} (1982),  1--16.




\bibitem{Ch} J.Y.\ Chemin, {\it Fluides parfaits incompressibles}, Ast\'{e}risque {\bf 230}, Soci\'{e}t\'{e}
 Math\'{e}matique de France (1995).


\bibitem{Chapl} S. A. Chaplygin, {\it On a pulsating cylindrical vortex.} Translated from the 1899 Russian original by G. Krichevets, edited by D. Blackmore and with comments by V. V. Meleshko. Regul. Chaotic Dyn. 12 (2007), no. 1, 101–116.

\bibitem{DZ} G.S.\ Deem and N. J.\ Zabusky, {\it Vortex waves : Stationary ``V-states", Interactions, Recurrence, and Breaking},
Phys. Rev. Lett. {\bf 40} 13 (1978),  859--862.





\bibitem{Fas} C.D.  Fassnacht, C. R. Keeton, D.  Khavinson,  {\it Gravitational lensing by elliptical galaxies, and the Schwarz function.} Analysis and mathematical physics, 115–129, Trends Math., Birkhäuser, Basel, 2009.

\bibitem{Flierl} G. R. Flierl, L. M. Polvani,{  \it Generalized Kirchhoff vortices}, Phys. Fluids 29, 2376–2379 (1986).

%
 \bibitem{HMV} T. Hmidi, J.  Mateu, J.  Verdera, {\it Boundary Regularity of Rotating Vortex Patches}. Arch. Ration. Mech. Anal. 209 (2013), no. 1, 171–208.

\bibitem{Kato} T. Kato and G. Ponce, {\it  Well-posedness of the Euler and Navier-Stokes equations in the Lebesgue spaces $L^p_s(\RR^2)$}. Rev. Mat. Iberoamericana {\bf 2} (1986), no. 1-2, 73--88.

\bibitem{Kida} S. Kida, {\it Motion of an elliptical vortex in a uniform shear flow,} J. Phys. Soc. Japan. 50, 3517-3520 (1981)

\bibitem{Kirc} G. Kirchhoff,{\ Vorlesungen uber mathematische Physik} (Leipzig, 1874).
%

\bibitem{L} H. \ Lamb, {\it Hydrodynamics}, Dover Publications, New York, (1945).

\bibitem{Mela}M. V. Melander,, A. S. Styczek, N. J. Zabusky, {\it A moment model for vortex interactions of the two-dimensional Euler equations. Part 1. Computational validation of a Hamiltonian elliptical representation}. Journal of Fluid Mechanics , 167  (1986) 95-115.

\bibitem{Neu} J. Neu, {\it  The dynamics of columnar vortex in an imposed strain,} Phys. Fluids 27, 12397–2402 (1984).

\bibitem{New} P. K. Newton, {\it The N-Vortex Problem. Analytical Techniques,} (Springer, New York, 2001).

\bibitem{Nov} P. S. Novikov, {\it On the uniqueness for the inverse problem of potential theory}, Dokl. Akad. Nauk SSSR ${\bf18}$ (1938), 165-168.


\bibitem{Saf} P. G. Saffman, {\it Vortex dynamics}. Cambridge Monographs on Mechanics and Applied Mathematics. Cambridge University Press, New York, 1992.

\bibitem{Uk}
M. R. Ukhovskii, V. I. Iudovich, {\it Axially symmetric flows of
ideal and viscous fluids filling the whole space}, Prikl. Mat. Meh.
{\bf 32} (1968), no. 1, 59-69.

\bibitem{Var} A. N.  Varchenko, P. I.  Etingof,{\it  Why the boundary of a round drop becomes a curve of order four.}
 University Lecture Series, 3. American Mathematical Society, Providence, RI, 1992.

\bibitem{V} J.\ Verdera, {\it $L^2$ boundedness of the Cauchy Integral and Menger
curvature,} Contemporary Mathematics {\bf 277} (2001), 139--158.




\bibitem{Wolib} W. Wolibner,{\it  Un th\'eor\`eme sur l'existence du mouvement plan d'un fluide parfait homog\`ene, incompressible, pendant un temps infiniment long,} Math. Z, Vol. 37, 1933, pp. 698--627.


\bibitem{WOZ} H. M.\ Wu, E. A.\ Overman II and N. J. Zabusky {\it Steady-state solutions of the Euler equations in two dimensions
: rotating and translating  V-states with limiting cases I.
Algorithms ans results}, J. Comput. Phys. {\bf 53} (1984), 42--71.

\bibitem{Y1} Y.~ Yudovich, {\it Nonstationary flow of an ideal incompressible liquid}. Zh. Vych. Mat., {\bf 3} (1963), 1032--1066.










\end{thebibliography}
\end{document}